\documentclass{article}

\usepackage{graphicx} 
\usepackage{amsmath}
\usepackage{amsthm}
\usepackage{amsfonts}
\usepackage{amssymb}
\usepackage{mathtools}
\newtheorem{theorem}{Theorem}
\newtheorem{lemma}[theorem]{Lemma}
\newtheorem{proposition}[theorem]{Proposition}
\newtheorem{corollary}[theorem]{Corollary}
\newtheorem{definition}{Definition}

\usepackage{url}
\usepackage{comment}

\usepackage{comment}
\usepackage{xfrac}
\usepackage{algorithm}
\usepackage{algpseudocode}
\usepackage{xcolor}
\usepackage{tikz-cd} 
\usepackage{booktabs}   
\usepackage{graphicx}   
\usepackage{float}      
\usepackage{xfrac}   
\usepackage[numbers]{natbib}
\usepackage{booktabs}
\usepackage{tabularx}
\usepackage{array}

\newcolumntype{L}{>{\raggedright\arraybackslash}X}
\newlength{\tblwidth}
\allowdisplaybreaks

\title{Wedderburn decomposition of Twisted skew abelian group algebra}
\author{Alvaro Otero Sanchez, aos073@ual.es}

\begin{document}
\maketitle

\begin{abstract}
We compute the Wedderburn decomposition of a finite twisted skew group algebra over a finite abelian group $G$ and a finite field $K$. To do so, we prove that this twisted skew group algebra is a homomorphic image of a finite group algebra. We then characterise its Shoda pairs, as well as propose an algorithm for computing a complete set of them.
\end{abstract}

\textbf{Keywords:}
Twisted skew group algebra; Wedderburn decomposition; Shoda pairs;  Group algebra

\section{Introduction}
In \cite{Wedderburn1908}, Wedderburn proved that all simple finite-dimensional algebras over a field are isomorphic to the matrix algebra over a division ring. In \cite{Artin1927}, Artin extended that result by proving that all finite-dimensional semisimple algebras are isomorphic to a direct sum of matrix algebras over possibly different division rings. Since then, this direct sum is called the Wedderburn decomposition of the algebra, and there has been a vast literature on how to compute them \cite{Pierce1982}, \cite{Behboodi2018}, \cite{Bade1992}, \cite{DeGraaf1997}.

One algebra that has a major impact on mathematics is the concept of the group ring. While its origins can be traced back to \cite{Molien1897}, the study of its Wedderburn decomposition started with the foundational result of Maschke \cite{Maschke1899}, where the author proved several results about the representation of finite groups over a field of characteristic $0$. This result was the origin of Maschke's theorem, which establishes the necessary and sufficient conditions for a group algebra to be semisimple. Since then, group rings have been closely connected to representation theory, providing an algebraic tool to understand the representation of groups by matrices over a ring. 

Group rings find a notable application in coding theory through the development of 
so-called group ring codes. Hurley \cite{HurleyHurley2009} initially proposed 
a method for constructing linear codes from group rings; however, a unified theory 
alongside their fundamental properties can be found in \cite{Hughes2001}. 
Recently, these concepts have been extended to twisted group rings. For instance, 
\cite{BehajainaBorelloDeLaCruz2024} demonstrated that a wide variety of classical codes can be understood as specific instances of twisted skew group ring codes.

One of the central problems in the study of group algebras and their generalizations is obtaining their Wedderburn decomposition. A standard method is to compute the characters of the group \cite{Yamada1974}; however, this process requires knowledge of the character table of the group under consideration. In \cite{JespersLealPaques2003}, the authors proposed a character-free method to compute the idempotents of the group algebra of a finite group over the rational numbers. In \cite{BROCHE200771}, this result was extended to finite fields, which led to computational implementations such as Wedderga, a GAP package designed to compute these idempotents \cite{Wedderga}. Nevertheless, there is still a lack of research on the case of twisted skew group rings.

Recently, there have been some major advances in the twisted case, such as the computation of the Wedderburn decomposition of twisted abelian group algebras \cite{DUARTE2024102386} or twisted dihedral group algebras \cite{DUARTE2026102792}. However, these works take a ring-theoretic approach and are limited to the twisted case. In this work, we will focus on a more group-theoretic approach, proving how we can compute the Shoda pairs of a certain type of metabelian group and use them for twisted skew group algebras over a finite field.

\section{Preliminars}
In this section, we introduce the relevant notation and review key properties of groups that will be used in this article. In addition, we will recall results on the Wedderburn decomposition of group rings
\begin{definition}
    Let $G$ be a finite group and $R$ be a commutative ring. The group algebra $RG$ is the free $R-$module generated by the elements of $G$ and multiplication given by
    \begin{equation}
        (a \overline{g})(b \overline{h}) = ab \overline{gh} \quad \forall a,b \in R, g,h \in G
    \end{equation}
    and extended linearly. 
\end{definition}

The idea of the group ring can be generalized to the concept of twisted skew group ring \cite{Karpilovsky1989}. To do so, we need the following definition

\begin{definition}
    Let $G$ be a group and $F$ be a field. Suppose that there exist an action $\sigma:G \longrightarrow Aut(F)$ that will be denoted by $b^g = \sigma(g)(b)$. A map $\alpha:G\times G \longrightarrow F^*$ is called a $2-$cocycle for the action $\sigma$ if 
    \begin{equation}
        \alpha(x,y)\alpha(xy,z)= \alpha(y,z)^x\alpha(x,yz) \quad \forall x,y,z\in G
    \end{equation}
    We will say that the a $2-$cocycle $\alpha$ is normaliced if $\alpha(1_G,g) = \alpha(g,1_G) = 1_F$. The set of all $2-$cocycle will be denoted by $Z^2(G,F)$.
\end{definition}

Now we can define a twisted skew group ring

\begin{definition}
Let $G$ be a group, $F$ be a field, $\sigma:G \longrightarrow Aut(F)$ an action of $G$ over $F$ and let $\alpha\in Z^2(G,F)$. The twisted skew group algebra of $F$ over $G$, denoted by $F^\alpha [ G; \sigma]$, is the free $F-$module generated by the elements $g\in G$, denoted by $\overline{g}$, with the inner operation

\begin{equation}
(a \overline{x}) \cdot (b \overline{y}) = a \prescript{x}{}{b} \alpha(x,y) \overline{xy} \forall a,b \in F, x,y \in G
\end{equation}

\noindent and extended by linearity.
\end{definition}
Note that the twisted skew group algebra is also called crossed product in the literature \cite{Karpilovsky1989}.

It is a well know fact that this structure is associative \cite{Passman1989}.
\begin{proposition} 
    The twisted skew group ring is an associative ring.
\end{proposition}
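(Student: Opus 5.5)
The plan is to check associativity directly on basis monomials. Since the multiplication on $F^\alpha[G;\sigma]$ is defined on elements $a\overline{x}$ and extended by linearity, it is biadditive. Both $(uv)w$ and $u(vw)$ are therefore additive in each of $u,v,w$. Associativity of the whole ring thus reduces to the identity
\begin{equation*}
\bigl((a\overline{x})(b\overline{y})\bigr)(c\overline{z}) = (a\overline{x})\bigl((b\overline{y})(c\overline{z})\bigr) \quad \forall a,b,c\in F,\ x,y,z\in G.
\end{equation*}
The remaining ring axioms are routine: $F^\alpha[G;\sigma]$ is an abelian group as a free $F$-module, and both distributive laws hold by the construction via linear extension. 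One point deserves care. Multiplication is not $F$-bilinear on the left factor's coefficient in the naive sense, because $b$ gets twisted by $x$. It is, however, additive in each argument, and additivity is all the reduction needs.

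Next I expand both sides using the defining product rule. For the left side,
\begin{equation*}
(a\,{}^{x}b\,\alpha(x,y)\overline{xy})(c\overline{z}) = a\,{}^{x}b\,\alpha(x,y)\,{}^{xy}c\,\alpha(xy,z)\,\overline{xyz}.
\end{equation*}
For the right side,
\begin{equation*}
(a\overline{x})(b\,{}^{y}c\,\alpha(y,z)\overline{yz}) = a\,{}^{x}\bigl(b\,{}^{y}c\,\alpha(y,z)\bigr)\alpha(x,yz)\,\overline{xyz}.
\end{equation*}
Since each $\sigma(x)$ is a field automorphism, it is multiplicative, so ${}^{x}(b\,{}^{y}c\,\alpha(y,z)) = {}^{x}b\,{}^{x}({}^{y}c)\,{}^{x}\alpha(y,z)$. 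Since $\sigma$ is a group homomorphism, ${}^{x}({}^{y}c)={}^{xy}c$.

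After this rewriting, and using commutativity of $F$, both coefficients share the factor $a\,{}^{x}b\,{}^{xy}c$. Associativity is therefore equivalent to $\alpha(x,y)\alpha(xy,z)=\alpha(y,z)^x\alpha(x,yz)$, which is exactly the $2$-cocycle condition in the definition. That completes the argument.

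The only real obstacle is bookkeeping. One has to use that $\sigma$ is a homomorphism into $Aut(F)$, so that the action composes correctly and respects products. One also has to notice that the order of factors in the cocycle identity matches the one produced by the expansion, including the action of $x$ on $\alpha(y,z)$.

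If one also wants a unit element, I would take a normalised $\alpha$ and check that $\overline{1_G}$ is the identity. This uses $\alpha(1_G,g)=\alpha(g,1_G)=1_F$ together with $\sigma(1_G)=\mathrm{id}$. For a general cocycle, the identity is $\alpha(1_G,1_G)^{-1}\overline{1_G}$; this follows from the cocycle relation with $x=y=1_G$ and with $y=z=1_G$. Associativity itself does not require normalisation.
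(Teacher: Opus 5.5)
Your proof is correct: biadditivity reduces associativity to monomials $a\overline{x}$. After using that each $\sigma(x)$ is a ring automorphism and that $\sigma$ is a homomorphism (so ${}^{x}({}^{y}c)={}^{xy}c$), the two coefficients agree precisely when the paper's cocycle identity $\alpha(x,y)\alpha(xy,z)=\alpha(y,z)^x\alpha(x,yz)$ holds. The paper gives no argument of its own and only cites Passman for this standard fact, so your direct computation is the usual verification behind that citation and needs nothing further. One wording slip: the twist falls on the right factor's coefficient, so the product is $F$-linear in the left factor and only semilinear in the right, not the other way round.
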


To understand the structure of twisted skew group rings, we need to introduce the cohomology of a group \cite{brown1982cohomology}.
\begin{definition}
    Let $G$ be a group, $F$ be a field and $\sigma:G \longrightarrow Aut(F)$ a group action that will be denoted by $b^g = \sigma(g)(b)$. A $2-$cocycle $\alpha\in Z^2(G,F)$ is said to be a $2-$coboundary if there exist $t:G\longrightarrow F^*$ such that
    \begin{equation*}
        \alpha(g,h) = t(g)t(h)^g t(gh)^{-1} \quad \forall g,h\in G
    \end{equation*}
    The set of all $2-$coboundaries will be denoted by $B^2(G,F)$
\end{definition}

The following result proves that coboundaries play a crucial role in the structure of twisted group algebras.
\begin{proposition}[\cite{linckelmann2018block}, Proposition 1.2.6]
     Let $G$ be a group,  $F$ be a field  and $\sigma:G \longrightarrow Aut(F)$ a group action. Then
     \begin{enumerate}
     \renewcommand{\labelenumi}{(\roman{enumi})}
         \item If $\alpha,\beta\in Z^2(G,F)$, then the product given by 
         \begin{align*}
             \alpha\beta: G\times G &\longrightarrow F \\
             (g,h) &\longmapsto \alpha(g,h) \beta(g,h)
         \end{align*}
     
     gives to $Z^2(G,F)$ the structure of abelian group.
     \item $B^2(G,F)$ is a normal subgroup of $Z^2(G,F)$, and we will call it quotient $H^2(G,F)=Z^2(G,F)/B^2(G,F)$ the second cohomology group of $G$ with coefficient in $F$.
     \item Let  $\alpha,\beta\in Z^2(G,F)$. Then $F^\alpha [G,\sigma] \simeq F^\beta [G,\sigma]$ if and only if $[\alpha] = [\beta]$ in $H^2(G,F)$.
     \end{enumerate}
\end{proposition}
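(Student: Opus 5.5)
We prove the three parts in order; each is a direct computation from the cocycle identity $\alpha(x,y)\alpha(xy,z)=\alpha(y,z)^x\alpha(x,yz)$.

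For (i), we check closure, identity and inverses. If $\alpha,\beta\in Z^2(G,F)$, then multiplying their cocycle identities gives
\begin{equation*}
(\alpha\beta)(x,y)(\alpha\beta)(xy,z)=\alpha(y,z)^x\beta(y,z)^x\alpha(x,yz)\beta(x,yz).
\end{equation*}
Since $\sigma(x)$ is a field automorphism, $\alpha(y,z)^x\beta(y,z)^x=((\alpha\beta)(y,z))^x$, so $\alpha\beta\in Z^2(G,F)$. Associativity and commutativity are inherited pointwise from $F^*$. The constant map $1$ is a cocycle because $1^x=1$. The pointwise inverse $\alpha^{-1}$ is a cocycle because $\sigma(x)$ commutes with inversion. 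This is also why the values must lie in $F^*$: each $\alpha(g,h)$ must be invertible.

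For (ii), normality is automatic in an abelian group, so it only remains to show that $B^2(G,F)$ is a subgroup of $Z^2(G,F)$.
\begin{enumerate}
\item To see that $\delta t(g,h)=t(g)t(h)^g t(gh)^{-1}$ is a cocycle, expand both sides of the identity. The left side is $t(x)t(y)^x t(xy)^{-1}\,t(xy)t(z)^{xy}t(xyz)^{-1}$. The right side is $t(y)^x t(z)^{yx'}\cdots$, where we use that $\sigma$ is a homomorphism, so $(b^y)^x=b^{xy}$. After cancellation both sides equal $t(x)t(y)^x t(z)^{xy}t(xyz)^{-1}$.
\item The identity $\delta(ts)=\delta t\,\delta s$ holds, using again that $\sigma(g)$ is multiplicative. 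Together with $\delta(t^{-1})=(\delta t)^{-1}$, this shows $B^2$ is a subgroup.
\end{enumerate}
The quotient $H^2(G,F)$ is then well defined.

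For (iii), the direction "if" goes as follows. Suppose $\beta=\alpha\,\delta t$. Define $\Phi:F^\beta[G,\sigma]\to F^\alpha[G,\sigma]$ as the $F$-linear map with $\Phi(a\overline{g})=a\,t(g)^{-1}\overline{g}$ (or $t(g)$, depending on convention). Check multiplicativity on basis products:
\begin{equation*}
\Phi(a\overline{x})\Phi(b\overline{y})=a\,t(x)^{-1}b^x (t(y)^{-1})^x\alpha(x,y)\overline{xy}.
\end{equation*}
This should match $\Phi(ab^x\beta(x,y)\overline{xy})$, which reduces exactly to the coboundary relation. $\Phi$ is bijective since it is diagonal with unit entries.

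The converse, "only if", is the main obstacle. As literally stated, an arbitrary ring isomorphism need not respect the $G$-grading or fix $F$. So I would interpret the isomorphism as one of $G$-graded $F$-algebras, i.e. as crossed products, as in \cite{linckelmann2018block}. Then $\Psi(\overline{g})$ lies in the $g$-component, so $\Psi(\overline{g})=u(g)\overline{g}$ with $u(g)\in F^*$, because $\Psi$ restricts to a bijection of one-dimensional components. Applying $\Psi$ to $\overline{x}\,\overline{y}=\beta(x,y)\overline{xy}$ gives $\beta=\alpha\,\delta u$, so $[\alpha]=[\beta]$. I would state this graded hypothesis explicitly and otherwise defer to the cited reference.
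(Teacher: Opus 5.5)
The paper gives no proof of this proposition; it only cites \cite{linckelmann2018block}. Your direct verification is the standard argument and is correct, so the comparison is between a citation and a proof. Two slips need cleaning up.
\begin{enumerate}
\item In (ii), the stray factor $t(z)^{yx'}$ should read $(t(z)^y)^x=t(z)^{xy}$. This is what your cancellation actually uses.
\item In (iii), take $\beta=\alpha\,\delta t$ and $\Phi\colon F^\beta[G,\sigma]\to F^\alpha[G,\sigma]$. Then the right map is $a\overline{g}\mapsto a\,t(g)\overline{g}$. Your displayed computation with $t(g)^{-1}$ yields $\beta=\alpha\,\delta(t^{-1})$ instead. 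This is only a relabelling, but you should commit to one choice.
\end{enumerate}

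What your version adds over the citation is the caveat on the converse in (iii), and you are right to insist on it. As the paper states it, for a bare isomorphism, the ``only if'' direction is false.

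For example, take $G=C_3$, $F=\mathbb{F}_7$ and the trivial action. Then $F^\alpha G\cong\mathbb{F}_7[x]/(x^3-a)$, where $[\alpha]$ corresponds to $a(\mathbb{F}_7^*)^3$ in $H^2(C_3,\mathbb{F}_7^*)\cong\mathbb{F}_7^*/(\mathbb{F}_7^*)^3$. Since $(\mathbb{F}_7^*)^3=\{1,6\}$, the choices $a=2$ and $a=3$ give non-cohomologous cocycles. Yet both cubics are irreducible, so both algebras are isomorphic to $\mathbb{F}_{7^3}$.

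Your graded argument is complete provided the isomorphism is also left $F$-linear. You use this when writing $\Psi(\beta(x,y)\overline{xy})=\beta(x,y)\Psi(\overline{xy})$. A graded isomorphism that induces an automorphism $\tau\neq\mathrm{id}$ of $F$ only gives $[\tau\circ\beta]=[\alpha]$.

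Finally, the paper only uses the ``if'' direction, to replace $\alpha$ by a normalized representative. You prove that direction with no extra hypothesis.
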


In addition, it is well known that every class in $H^2(G,F)$ contains a normalized representative. Thus, up to isomorphism, we may assume without loss of generality that all $2$-cocycles in this work are normalized. For further details on the cohomology of groups and its computation, see \cite{brown1982cohomology}.

From \cite{JespersLealPaques2003} we can get the following definition

\begin{definition}
Let $G$ be a finite group of exponent $n$, and let $q$ be an integer coprime to $n$. Consider the subgroup $Q = \langle q + n\mathbb{Z} \rangle \subseteq \mathbb{Z}_n^\times$. The group $Q$ acts on $G$ via power maps, given by
\[
  (m + n\mathbb{Z}) \cdot g = g^m \quad \text{for all } m + n\mathbb{Z} \in Q, \, g \in G.
\]
The \emph{$q$-cyclotomic class} of an element $g \in G$, denoted by $C_q(g)$, is the orbit of $g$ under this action, that is,
\[
  C_q(g) = \left\{ g^{q^k} : k \ge 0 \right\} = \left\{ g, g^q, g^{q^2}, \dots, g^{q^{o-1}} \right\},
\]
where $o = \operatorname{ord}_d(q)$ is the multiplicative order of $q$ modulo $d = |g|$.
\end{definition}

Let $q$ be a prime power and let $k$ be a positive integer coprime to $q$. We denote by $\xi_k$ a primitive $k$-th root of unity over $\mathbb{F}_q$. We let $o_k = o_k(q) = \operatorname{ord}_k(q)$ denote the multiplicative order of $q$ modulo $k$.

In particular, the field extension generated by $\xi_k$ over $\mathbb{F}_q$ is given by
\[
  \mathbb{F}_q(\xi_k) \cong \mathbb{F}_{q^{o_k}}.
\]

\begin{definition}
Let $N \trianglelefteq G$ be such that $G/N$ is cyclic of order $k$, and let
$C \in \mathcal{C}(G/N)$. If $\xi \in C$ and
\[
\operatorname{tr}=\operatorname{tr}_{F(\zeta_k)/F}
\]
denotes the trace of the field extension $F(\zeta_k)/F$, then we set
\[
\varepsilon_C(G,N)
=
\frac{1}{|G|}
\sum_{g\in G}
\operatorname{tr}(\xi(g))\,g^{-1}
=
\frac{1}{[G:N]}
\widehat{N}
\sum_{X\in G/N}
\operatorname{tr}(\xi(X))\,g_X^{-1},
\]
where $\overline{g}$ denotes the image of $g$ in $G/N$, and $g_X$ denotes a
representative of $X\in G/N$.

Let $H \trianglelefteq K \leq G$ such that $K/H$ is cyclic, and let
$C \in \mathcal{C}(K/H)$. Then $e_C(G,K,H)$ denotes the sum of the distinct
$G$-conjugates of $\varepsilon_C(K,H)$. In addition, if we denote $N = N_G(H) \cap N_G(K)$, then $N$ acts on $K/H$ by conjugation and this induces an action of $N$ on the set of $q$-cyclotomic classes of $K/H$. We denote by $E_G(K/H)$ the 
stabilizer of any $q$-cyclotomic class of $K/H$ containing generators 
of $K/H$ under this action of $N$.
\end{definition}

Recall that a group $G$ is \emph{metabelian} if its derived subgroup $G' = [G,G]$ is abelian or, equivalently, if there exists an abelian normal subgroup $A \triangleleft G$ such that the quotient $G/A$ is abelian. In what follows, we restrict our attention to metabelian groups, as in \cite{BROCHE200771} it is proved the following result. 
\begin{proposition} \label{metabelian}
    
Let $G$ be a finite metabelian group, $A$ a maximal abelian subgroup of $G$ containing $G'$, and $F$ a finite field such that $FG$ is semisimple. Then every primitive central idempotent of $FG$ is of the form $e_C(G,K,H)$, where $(K,H)$ is a pair of subgroups of $G$ satisfying the following conditions:

\begin{enumerate}
\item $K$ is a maximal element in the set
\[
\{\, B \leq G \mid A \leq  B  \text{ and}\ B' \leq H \leq B \,\},
\]

\item $K/H$ is cyclic,
\end{enumerate}

and $C \in \mathcal{C}(K/H)$.

Furthermore, for every pair $(K,H)$ of subgroups of $G$ satisfying (1) and (2), and every $C \in \mathcal{C}(K/H)$, we have
\[
FG\, e_C(G,K,H) \cong M_{[G:K]}\big( \mathbb{F}_{q^{\,o/[E:K]}} \big),
\]
where $E = E_G(K/H)$ and $o$ is the multiplicative order of $q$ modulo $[K:H]$.
\end{proposition}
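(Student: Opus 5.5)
The plan is to follow the strategy of Broche and del Río for finite fields, combined with the Shoda pair machinery of Olivieri, del Río and Simón. The argument has two halves. First, every primitive central idempotent of $FG$ arises from a \emph{strong Shoda pair} $(K,H)$ with $K \geq A$. Second, for such a pair the simple component is computed explicitly.

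\textbf{Step 1 (characters are monomial from $A$).} Let $\chi$ be an irreducible character of $G$ over $\overline{F}$. Since $G/A$ is abelian and $A$ is abelian normal, Clifford theory shows that $\chi$ is induced from a linear character of some subgroup $K$ containing $A$. I would choose $K$ maximal among subgroups $B \geq A$ such that $\chi$ restricted to $B$ contains a linear constituent $\lambda$ that extends to $B$. With $H = \ker\lambda$, we get $K' \leq H$ and $K/H$ cyclic.

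\textbf{Step 2 (the pair is a strong Shoda pair).} I would verify the standard conditions.
\begin{enumerate}
\item $H \trianglelefteq K$ with $K/H$ cyclic and maximal abelian in $N_G(H)/H$.
\item Distinct $G$-conjugates of $\varepsilon(K,H)$ are orthogonal.
\end{enumerate}
Maximal abelianness follows from maximality of $K$ in the set in (1): if $B/H$ is abelian with $K \leq B \leq N_G(H)$, then $B' \leq H$ and $B \geq A$, which forces $B = K$. Orthogonality uses metabelianity. For $g \notin N_G(H)$ one shows $\varepsilon(K,H)\,\varepsilon(K,H)^g = 0$ by finding an element of $K \cap K^g$ lying in $H^g$ but not in $H$ (or vice versa), using that $A \leq K \cap K^g$ and that $A$ is normal. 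Then, by the Olivieri–del Río–Simón theorem, $\mathbb{Q}G\, e(G,K,H)$ is simple. Over $F$, the $\mathbb{Q}$-idempotent splits along the $q$-cyclotomic classes $C$ of $K/H$, so the primitive central idempotents of $FG$ are the $e_C(G,K,H)$. Conversely, every pair satisfying (1) and (2) yields, by the same argument, a strong Shoda pair and hence primitive central idempotents.

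\textbf{Step 3 (the simple component).} For a strong Shoda pair, $FG\,e_C(G,K,H)$ is isomorphic to $M_{[G:E]}$ of a crossed product $F(\xi_{[K:H]})^{\tau}[E/K]$, where $E = E_G(K/H)$. Here $E/K$ acts faithfully by Galois automorphisms on $F(\xi_{[K:H]}) \cong \mathbb{F}_{q^o}$. Since Brauer groups of finite fields are trivial (Wedderburn's little theorem), this crossed product is $M_{[E:K]}$ of the fixed field, which is $\mathbb{F}_{q^{o/[E:K]}}$. Collecting sizes gives $M_{[G:E][E:K]} = M_{[G:K]}$.

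The main obstacle is the orthogonality in Step 2. It is the only place where both the maximality of $K$ and metabelianity are genuinely needed, and it requires a careful case analysis on $g \in G \setminus N_G(H)$ versus $g \in N_G(H) \setminus E$. In the latter case, conjugates are distinct idempotents of the commutative algebra $FK$ lying in different cyclotomic classes, hence orthogonal. My first attempt for the former case would be to reduce to the abelian normal subgroup $A$, comparing $H \cap A$ with $H^g \cap A$.
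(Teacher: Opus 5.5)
\textbf{Verdict: correct overall architecture, but there is a genuine gap.} The paper does not prove this statement; it quotes it from Broche and del R\'{\i}o. They obtain it by combining two results: the Olivieri--del R\'{\i}o--Sim\'on theorem for metabelian groups (pairs satisfying (1) and (2) are strong Shoda pairs and give all primitive central idempotents of $\mathbb{Q}G$), and their own finite-field computation for strong Shoda pairs. Your outline has exactly this architecture, and Step 3 is the right computation.

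\textbf{The orthogonality step.} The step you call the main obstacle is left open, and the attack you propose for it does not work. The missing idea is that $G'\leq A\leq K$ forces $K\trianglelefteq G$. Two things follow.
\begin{enumerate}
\item $K\trianglelefteq N_G(H)$ holds automatically. This is a strong Shoda condition that is missing from your list.
\item For $g\notin N_G(H)$ we get $\varepsilon(K,H)^g=\varepsilon(K,H^g)$, with $K'\leq H^g$ and $K/H^g$ cyclic. This is the primitive central idempotent of $\mathbb{Q}K$ attached to the linear characters of $K$ with kernel $H^g\neq H$. So it differs from $\varepsilon(K,H)$ and is therefore orthogonal to it. Over $F$ this passes to $\varepsilon_C(K,H)=\varepsilon_C(K,H)\,\varepsilon(K,H)$.
\end{enumerate}

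Comparing $H\cap A$ with $H^g\cap A$ cannot detect $g\notin N_G(H)$. Here is a counterexample.
\begin{itemize}
\item Take $A=\langle a\rangle\times\langle b\rangle\cong C_3^2$ and $G=A\rtimes(\langle x\rangle\times\langle y\rangle)$, where $x$ inverts $A$, $a^y=ab$ and $b^y=b$.
\item Let $K=A\langle y\rangle$ and $H=\langle b,ya\rangle$.
\item Then $A$ is self-centralizing, $G'=A$, and $(K,H)$ satisfies (1) and (2).
\item Moreover $(ya)^x=ya^{-1}\notin H$, yet $H\cap A=\langle b\rangle=H^x\cap A$.
\end{itemize}
Every element of $H^x\setminus H$ lies outside $A$, so restricting to $A$ loses exactly the information you need.

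\textbf{The link between Step 1 and condition (1).} In Step 1 you choose $K$ maximal among the $B\geq A$ for which $\chi_B$ has a linear constituent. Step 2, however, uses maximality in the set of (1), and you never connect the two. The connection goes as follows.
\begin{itemize}
\item Suppose $K\leq B$ and $B'\leq H$. Then $H\trianglelefteq B$ and $B/H$ is abelian.
\item So $\lambda^B$ is the sum of the extensions of $\lambda$ to $B$.
\item Since $\langle\chi_B,\lambda^B\rangle=\langle\chi_K,\lambda\rangle\neq 0$, $\chi_B$ has a linear constituent, hence $B=K$.
\end{itemize}

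\textbf{Smaller points.} The equality $\chi=\lambda^G$ does not come from Clifford theory up front. It follows afterwards: once $(K,H)$ is a strong Shoda pair, $\lambda^G$ is irreducible, and it contains $\chi$ by Frobenius reciprocity. Also, $FK$ need not be commutative (the $K$ above is nonabelian of order $27$). So the case $g\in N_G(H)\setminus E$ should be argued inside $FK\widehat{H}\cong F(K/H)$.
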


Therefore, in order to compute the Wedderburn decomposition of a metabelian group, it is enough to compute the subgroups that satisfies the conditions of proposition \ref{metabelian}. To relate this problem to the twisted skew abelian group algebra, we have the following result.

\section{Twisted skew group ring as quotient}

Let $G$ be a group, let $p$ be a prime number, and $\mathbb{F}_{p^n}$ be the finite field of $p^n$ elements. Let $\mathbb{F}_{p^n}^\alpha[G,\sigma]$ be a twisted skew group ring. We will prove that it is the quotient of a group ring $\mathbb{F}_p \hat{G}$ for some finite group $\hat{G}$. 

Recall that $\mathrm{Aut}(\mathbb{F}_{p^n}) = \mathrm{Gal}(\mathbb{F}_{p^n}/\mathbb{F}_p) \cong C_n$, the cyclic group of order $n$. This Galois group is generated by the Frobenius automorphism $\sigma \colon \mathbb{F}_{p^n} \longrightarrow \mathbb{F}_{p^n}$, defined by $\sigma(x) = x^p$ for all $x \in \mathbb{F}_{p^n}$. In particular, we have that $\mathbb{F}_p$ is fixed by the action of $G$ and $\mathbb{F}_{p^n}^\alpha[G,\sigma]$ is an algebra over its prime field.

Let $\hat{G}= \mathbb{F}_{p^n}^* \times G$ and let 
\begin{align*}
    \hat{G} \times \hat{G} & \longrightarrow \hat{G} \\
    ((a,g),(b,h)) & \longmapsto (a \prescript{g}{}{b} \alpha(g,h) , gh)
\end{align*}

\begin{proposition} \label{hatGisgroup}
    $\hat{G}$ with the previous operation is a group. 
\end{proposition}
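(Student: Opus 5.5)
We verify the group axioms directly for the set $\hat{G}=\mathbb{F}_{p^n}^*\times G$ with the product $(a,g)(b,h)=(a\,{}^{g}b\,\alpha(g,h),gh)$. Closure is immediate: each ${}^{g}b$ lies in $\mathbb{F}_{p^n}^*$ because $\sigma(g)$ is a field automorphism, and $\alpha$ takes values in $\mathbb{F}_{p^n}^*$. So the work is associativity, identity and inverses. We use throughout that $\alpha$ is normalized, which we may assume up to isomorphism as discussed after the cohomology proposition.

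\emph{Associativity.} This is the only step with real content, and it amounts to the $2$-cocycle identity. Compute
\begin{equation*}
((a,g)(b,h))(c,k)=\bigl(a\,{}^{g}b\,\alpha(g,h)\,{}^{gh}c\,\alpha(gh,k),\,ghk\bigr).
\end{equation*}
For the other bracketing, use that $\sigma$ is a homomorphism into $\mathrm{Aut}(\mathbb{F}_{p^n})$, so ${}^{g}({}^{h}c)={}^{gh}c$ and ${}^{g}$ is multiplicative. This gives
\begin{equation*}
(a,g)((b,h)(c,k))=\bigl(a\,{}^{g}b\,{}^{gh}c\,{}^{g}\alpha(h,k)\,\alpha(g,hk),\,ghk\bigr).
\end{equation*}
Since $\mathbb{F}_{p^n}^*$ is commutative, the two first coordinates agree if and only if $\alpha(g,h)\alpha(gh,k)={}^{g}\alpha(h,k)\,\alpha(g,hk)$, which is exactly the cocycle condition. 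The only care needed is the convention: the definition of a $2$-cocycle writes $\alpha(y,z)^x$, meaning $\sigma(x)$ applied to $\alpha(y,z)$. I expect this matching of conventions to be the main (mild) obstacle.

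\emph{Identity.} We claim $(1,1_G)$ is the identity. On the left, $(1,1_G)(b,h)=({}^{1_G}b\,\alpha(1_G,h),h)=(b,h)$, since $\sigma(1_G)=\mathrm{id}$ and $\alpha$ is normalized. On the right, $(a,g)(1,1_G)=(a\,{}^{g}1\,\alpha(g,1_G),g)=(a,g)$, since ${}^{g}1=1$.

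\emph{Inverses.} Given $(a,g)$, we look for $(b,g^{-1})$ with $a\,{}^{g}b\,\alpha(g,g^{-1})=1$. Solving gives $b=\sigma(g)^{-1}\bigl(a^{-1}\alpha(g,g^{-1})^{-1}\bigr)={}^{g^{-1}}\bigl(a^{-1}\alpha(g,g^{-1})^{-1}\bigr)$. This makes $(b,g^{-1})$ a right inverse. In a monoid where every element has a right inverse, each right inverse is also a left inverse, so this suffices. Alternatively, one can check directly that $(b,g^{-1})(a,g)$ is the identity, using the cocycle identity with $(x,y,z)=(g^{-1},g,g^{-1})$ to relate $\alpha(g^{-1},g)$ to ${}^{g^{-1}}\alpha(g,g^{-1})$.

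Hence $\hat{G}$ is a group. Its order is $(p^n-1)|G|$, so it is finite whenever $G$ is.
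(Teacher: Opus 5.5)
Your proof is correct and follows the paper's route: closure, the identity $(1,1_G)$ via normalization, and the same inverse $\bigl({}^{g^{-1}}(a^{-1}\alpha(g,g^{-1})^{-1}),g^{-1}\bigr)$. The differences are small improvements. You verify associativity directly as the cocycle identity, whereas the paper inherits it from associativity of $\mathbb{F}_{p^n}^\alpha[G,\sigma]$. You also justify the left inverse properly, via the monoid argument or the cocycle identity at $(g^{-1},g,g^{-1})$, where the paper only says ``analogously''.
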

\begin{proof}
    As $G$ and $\mathbb{F}_{p^n}^*$ are groups, we have that $\hat{G}$ is closed under the operation. Also, the associativity comes from the associativity of the product in a twisted skew group ring. 

    It has an identity element, given by $1_{\hat{G}} = (1_{\mathbb{F}_p^n}, 1_G)$ as for all $(x,y) \in \hat{G}$
    \begin{align*}
        (x,y) 1_{\hat{G}} = &  (x,y) (1_{\mathbb{F}_p^n}, 1_G) \\ = & (x \prescript{y}{}{1_{\mathbb{F}_p^n}}, y 1_G )  \\ = & (x,y)  \\ = & (  1_{\mathbb{F}_p^n}\prescript{1_G}{}{x},  1_G y)  \\ = & (1_{\mathbb{F}_p^n}, 1_G)  (x,y)  \\ = & 1_{\hat{G}} (x,y)
    \end{align*}
    And therefore $(x,y) 1_{\hat{G}} = (x,y) = 1_{\hat{G}}  $ as we wanted to prove. 
    Finally, the inverse element of $(x,y) \in \hat{G}$ is given by $\left( \left(\frac{1}{x\alpha(y,y^{-1})}\right)^{y^{-1}},y^{-1} \right)$, as 
    \begin{equation}
        (x,y)  \left( \left(\frac{1}{x\alpha(y,y^{-1})}\right)^{y^{-1}},y^{-1} \right) = \left( \frac{x\alpha(y,y^{-1})}{x\alpha(y,y^{-1})} , y y^{-1}\right) = (1_{\mathbb{F}_{p^n}},1_G) = 1_{\hat{G}}
    \end{equation}
    and the other side is proven analogously.
\end{proof}

We know that $(\mathbb{F}_q^{*},\cdot)$ is a cyclic group of order $q-1$, and let
$\lambda \in \mathbb{F}_q^{*}$ be a generator. If we consider the powers of $\lambda$ within the
field then $\lambda$ satisfies a
polynomial $P(x)\in \mathbb{F}_p[x]$ 
Moreover, the finite field $\mathbb{F}_q$ can be realised as the quotient ring
$\mathbb{F}_q \cong \mathbb{F}_p[x]/\langle P(x)\rangle .
$
\begin{proposition}
Let $G$ be a group, let $p$ be a prime number, and $\mathbb{F}_{p^n}$ be the finite field of $p^n$ elements, and let $\mathbb{F}_{p^n}^\alpha[G,\sigma]$ be a twisted skew group ring. Let $\lambda\in \mathbb{F}_q^{*}$ be a generator of $ \mathbb{F}_q^{*}$ with minimal polynomial $P(x) \in \mathbb{F}_p[x]$. 

Then there is an epimorphism of $\mathbb{F}_p$-algebras over  $\psi:\mathbb{F}_p \hat{G} \longrightarrow \mathbb{F}_{p^n}^\alpha[G,\sigma]$ such that $\ker(\psi) = \langle P((\lambda,1)) \rangle$.
\end{proposition}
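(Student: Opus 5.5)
The plan is to define $\psi$ on the group basis of $\mathbb{F}_p\hat{G}$ by
\[
\psi\big((a,g)\big) = a\,\overline{g}\in \mathbb{F}_{p^n}^\alpha[G,\sigma], \qquad (a,g)\in\hat{G},
\]
and extend $\mathbb{F}_p$-linearly. I will check that it is a surjective algebra map, that $P((\lambda,1))$ lies in its kernel, and then get equality of the kernel with the two-sided ideal $I=\langle P((\lambda,1))\rangle$ by comparing dimensions over $\mathbb{F}_p$.

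\emph{Homomorphism.} On basis elements, the definition of the product in $\hat{G}$ (Proposition \ref{hatGisgroup}) gives
\[
\psi\big((a,g)(b,h)\big)=\psi\big((a\,{}^{g}b\,\alpha(g,h),gh)\big)=a\,{}^{g}b\,\alpha(g,h)\,\overline{gh}=(a\overline{g})(b\overline{h}),
\]
and bilinearity extends this to all of $\mathbb{F}_p\hat{G}$. Normalization of $\alpha$ gives $\psi(1_{\hat G})=\overline{1}$. The map is $\mathbb{F}_p$-linear because $\mathbb{F}_p$ is fixed by $\sigma$ and is central in the target. A scalar $c\in\mathbb{F}_p$ multiplying $(a,g)$ in the group algebra is sent to $c\,a\,\overline{g}$; it is not identified with the group element $(c,1)$.

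\emph{Surjectivity.} Any element of the target has the form $\sum_g a_g\overline{g}$ with $a_g\in\mathbb{F}_{p^n}$, and it is the image of $\sum_{a_g\neq 0}(a_g,g)$.

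\emph{$I\subseteq\ker\psi$.} Since $\alpha$ is normalized and $1_G$ acts trivially, $(\lambda,1)^k=(\lambda^k,1)$ for every $k$. Hence $\psi$ restricted to the span of $\{(\lambda^k,1)\}$ is evaluation at $\lambda$, so $\psi(P((\lambda,1)))=P(\lambda)\overline{1}=0$. As $\ker\psi$ is a two-sided ideal, $I\subseteq\ker\psi$. Note that $(\lambda,1)$ is not central in general, because $(b,h)(\lambda,1)=(b\,{}^{h}\lambda,h)$, so $I$ must be taken as the two-sided ideal.

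\emph{Reverse inclusion, the main point.} I will show $\dim_{\mathbb{F}_p}\mathbb{F}_p\hat G/I\le n|G|=\dim_{\mathbb{F}_p}\mathbb{F}_{p^n}^\alpha[G,\sigma]$. Since $\psi$ induces a surjection $\mathbb{F}_p\hat G/I\to \mathbb{F}_{p^n}^\alpha[G,\sigma]$, this forces it to be an isomorphism, and hence $\ker\psi=I$. For the bound, two facts are needed.
\begin{enumerate}
\item Normalization gives $(a,1)(1,g)=(a\,\alpha(1,g),g)=(a,g)$, so every basis element factors as $(\lambda,1)^k(1,g)$ for some $k$.
\item Modulo $I$, any power $(\lambda,1)^k$ reduces to an $\mathbb{F}_p$-combination of $(\lambda,1)^i$ with $0\le i<n=\deg P$. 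This holds because $x^k\equiv r(x)\pmod{P}$ with $\deg r<n$, and $x^k-r(x)$ is a left multiple of $P$ in the commutative subalgebra $\mathbb{F}_p[(\lambda,1)]$.
\end{enumerate}
Consequently $\mathbb{F}_p\hat G/I$ is spanned by the $n|G|$ classes of $(\lambda,1)^i(1,g)$ with $0\le i<n$ and $g\in G$, which gives the required bound. The only delicate points are keeping track of normalization and of the non-centrality of $(\lambda,1)$, both handled above.
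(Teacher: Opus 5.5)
Your proof is correct. The definition of $\psi$, the multiplicativity check, surjectivity and the inclusion $\langle P((\lambda,1))\rangle\subseteq\ker\psi$ are the same as in the paper; you prove the reverse inclusion differently.

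The paper argues element by element. It writes a kernel element as $\sum_{g}\bigl(\sum_i a_{i,g}(\lambda,1)^i\bigr)(1,g)$ and uses that the $\overline{g}$ form an $\mathbb{F}_{p^n}$-basis to get $\sum_i a_{i,g}\lambda^i=0$ for every $g$. It then uses minimality of $P$ to write each coefficient polynomial as $k_g(x)P(x)$, so the element equals $\sum_g k_g((\lambda,1))P((\lambda,1))(1,g)$. You instead reduce all of $\mathbb{F}_p\hat{G}$ modulo $I$ by Euclidean division and conclude by comparing $\mathbb{F}_p$-dimensions.

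Both arguments rest on the same two facts: $(\lambda^i,g)=(\lambda,1)^i(1,g)$ by normalization, and division by $P$ inside the commutative subalgebra $\mathbb{F}_p[(\lambda,1)]$. You state these explicitly, together with the two-sidedness of $I$; the paper uses them tacitly.

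What each buys:
\begin{itemize}
\item The paper's version needs no dimension count, so it works verbatim even for infinite $G$.
\item Yours never has to expand an arbitrary kernel element. It only needs $\deg P=n$ and $\dim_{\mathbb{F}_p}\mathbb{F}_{p^n}^\alpha[G,\sigma]=n|G|$, at the cost of assuming $|G|<\infty$. That is harmless here, since the paper works with a finite $\hat{G}$.
\end{itemize}

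If you want to drop finiteness, note that your spanning classes $(\lambda,1)^i(1,g)$, $0\le i<n$, are sent to the $\mathbb{F}_p$-linearly independent elements $\lambda^i\overline{g}$. That already forces the induced map $\mathbb{F}_p\hat{G}/I\to\mathbb{F}_{p^n}^\alpha[G,\sigma]$ to be injective.
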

\begin{proof}
We will prove that the epimorphism $\psi :\mathbb{F}_p \hat{G}  \longrightarrow \mathbb{F}_{p^n}^\alpha[G,\sigma]$ is given by $\psi(a \overline{(b,c)}) = ab \overline{c}$ for all $a\in \mathbb{F}_p$, $(b,c) \in \hat{G}$ and extended by linearity. It respects addition by definition, so we only have to prove that it respects the product, and to do so we can focus on the basis elements
\begin{align*}
        \psi(\overline{(a,g)} \overline{(b,h)} ) =&  \psi(  (a \prescript{g}{}{b} \alpha(g,h) , gh)  \\ =&  a \prescript{g}{}{b} \alpha(g,h) \overline{gh}  \\ =& ( a\overline{g} ) ( b \overline{h} )  \\ =& \psi(\overline{(a,g)}) \psi ( \overline{(b,h)}
\end{align*}

And therefore, it is a $\mathbb{F}_p$-homomorphism of rings. To prove that it is an $\mathbb{F}_p$-epimorphism, note that as an $\mathbb{F}_p$-algebra, it is generated by $\lambda^i\overline{g}$ for $i=0,\cdots, n-1$ and $g\in G$, and clearly $\lambda^i\overline{g} = \psi((\lambda^i,g))$.

To see that the kernel is given by $\langle P((\lambda,1)) \rangle$, first note that $\psi ( P(\lambda,1)) = P(\lambda) \overline{1} = 0$. For the other implication, let $\sum_{i=0,\cdots, n; g\in G}a_{i,g} ( \lambda^i, g) \in \mathbb{F}_p \hat{G} $ such that $\psi( \sum_{i=0,\cdots, n; g\in G}a_{i,g} ( \lambda^i, g) ) = 0$. Then we have that 
\begin{equation*}
    \psi \left(\sum_{i=0,\cdots, n; g\in G}a_{i,g} ( \lambda^i, g) \right) = \sum_{i=0,\cdots, n; g\in G}a_{i,g}  \lambda^i \overline{g} = \sum_{g\in G} \left(\sum_{i=0}^n  a_{i,g} \lambda^i \right) \overline{g} = 0
\end{equation*}
So $\sum_{i=0}^n  a_{i,g} \lambda^i =0$ for all $g\in G$. As $p(x) $ is the minimal polynomial of $\lambda$, we have that $P(x) | \sum_{i=0}^n  a_{i,g} x^i$ and $\sum_{i=0}^n  a_{i,g} x^i = k_g(x)P(x)$. As a result, 
\begin{align*}
    \sum_{i=0,\cdots, n; g\in G}a_{i,g} ( \lambda^i, g) = & \sum_{i=0,\cdots, n; g\in G}a_{i,g} ( \lambda^i, 1)  (1,g)\\ = & \sum_{g\in G} \left( k_g( \lambda^i, 1) p( \lambda^i, 1) \right) (1,g) \in \langle P(\lambda,1)\rangle
\end{align*}
\end{proof}

Finally, we have the following easy result.
\begin{corollary}
    Let $G$ be an abelian group. Then $\hat{G}$ is a metabelian group.
\end{corollary}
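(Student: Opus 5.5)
We exhibit an abelian normal subgroup of $\hat{G}$ with abelian quotient. The natural candidate is $A=\{(a,1_G): a\in\mathbb{F}_{p^n}^*\}$, the image of $\mathbb{F}_{p^n}^*$ in $\hat{G}$. We then show that $A$ is an abelian normal subgroup and that $\hat{G}/A\cong G$. Since $G$ is abelian, this is exactly the second characterisation of metabelian groups recalled before Proposition \ref{metabelian}.

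First, we check that $A$ is an abelian subgroup. Because $\alpha$ is normalized and $1_G$ acts trivially on $\mathbb{F}_{p^n}$, the product in $\hat{G}$ gives
\begin{equation*}
(a,1_G)(b,1_G)=(a\,b\,\alpha(1_G,1_G),1_G)=(ab,1_G).
\end{equation*}
So $a\mapsto (a,1_G)$ is an injective homomorphism from the abelian group $\mathbb{F}_{p^n}^*$ into $\hat{G}$, and its image is $A$.

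Second, we consider the projection $\pi:\hat{G}\to G$, $(a,g)\mapsto g$. By the definition of the operation in Proposition \ref{hatGisgroup}, the second coordinate of $(a,g)(b,h)$ is $gh$, so $\pi$ is a homomorphism. It is clearly surjective, and its kernel is exactly $A$. Hence $A\trianglelefteq\hat{G}$ and $\hat{G}/A\cong G$, which is abelian by hypothesis. Thus $\hat{G}$ is an extension of the abelian group $A$ by the abelian group $G$, so $\hat{G}$ is metabelian.

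If one prefers the characterisation via $\hat{G}'$ being abelian, it follows at once: $\hat{G}/A$ abelian forces $\hat{G}'\le A$, and subgroups of abelian groups are abelian. There is no real obstacle here. The only point needing care is the use of normalization of $\alpha$, which the paper assumes without loss of generality, together with the triviality of the action of $1_G$; these ensure that $A$ is closed under the product and isomorphic to $\mathbb{F}_{p^n}^*$. Even without normalization, $A=\ker\pi$ would still be a normal subgroup. It would also still be abelian, because $\alpha(1_G,1_G)$ is a fixed scalar, so the product $(a,1_G)(b,1_G)=(ab\,\alpha(1_G,1_G),1_G)$ is symmetric in $a$ and $b$. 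So the argument is robust.
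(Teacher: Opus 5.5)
Your proposal is correct and follows essentially the paper's own argument: you take the kernel of the projection $\pi_2\colon\hat{G}\to G$, which is $\mathbb{F}_{p^n}^*\times\{1_G\}$, as an abelian normal subgroup (abelian because $\alpha$ is normalized and $1_G$ acts trivially), and the quotient is $\cong G$, which is abelian. Your extra remark is also correct: without normalization the product $(a,1_G)(b,1_G)=(ab\,\alpha(1_G,1_G),1_G)$ is still symmetric in $a$ and $b$, so the kernel stays abelian and normalization is not actually needed.
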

\begin{proof}
    Let $\pi_2 : \hat{G} \longrightarrow G$ be the projection on the second component. It is trivial to check that it is an epimorphism of groups and that $\ker (\pi_2) = \mathbb{F}_{p^n} \times \{1\}$. As the $2$-cocycle is normalised, and $1_G$ acts trivially over $\mathbb{F}_{p^n}$, we have that $\mathbb{F}_{p^n} \times \{1\} \simeq \mathbb{F}_{p^n}$, and therefore it is commutative. In addition, as it is the kernel of a morphism, it is a normal subgroup. Therefore, $\ker(\pi_2) \trianglelefteq \hat{G}$ is an abelian normal subgroup such that $\hat{G} / \ker(\pi_2) \simeq Im(\pi_2) = G$ is abelian; thus, $\hat{G}$ is metabelian, as we wanted to prove. 
\end{proof}

As we have seen that $\mathbb{F}_{p^n}^\alpha[G; \sigma]$ is a homomorphic image of $\mathbb{F}_p\hat{G}$, to compute the idempotents of $\mathbb{F}_{p^n}^\alpha[G; \sigma]$ it is enough to do it for $\mathbb{F}_p\hat{G}$ and compute its projection. 

\section{Subgroups of $\hat{G}$}
In this section, we will characterise the subgroups of $\hat{G}$, where $G$ will denote an abelian group, let $p$ be a prime number, and $\mathbb{F}_{p^n}$ be the finite field of $p^n$ elements, and let $\mathbb{F}_{p^n}^\alpha[G,\sigma]$ be a twisted skew group ring.

\begin{definition}
    Let $H\leq \hat{G}$ be a subgroup and  $\psi : \pi_2(H)\longrightarrow \mathbb{F}_q^*$ a set map such that $(\psi(g),g)\in H$ for all $g\in \pi_2(H)$. A Jineta of $H$ by $\psi$, is the set $Jin_\psi (H)=\{(\psi(g),g) ; g\in \pi_2(H)\}$. A Jineta of $H$ is a Jineta of $H$ by some $\psi$, and will be denoted as $Jin(\hat{H})$
\end{definition}

We will also need the following definition
\begin{definition}
     Let $H\leq \hat{G}$ be a subgroup. The Quag of $H$, denoted by $Q_{\hat{H}}$, is the set
     \begin{equation}
         Q_{\hat{H}} = \{a\in \mathbb{F}_q^* ; (a,1) \in H\}
     \end{equation}
\end{definition}

Note that the group $\mathbb{F}_q^*$ can be seen as a subgroup of $\hat{G}$ with its identification as $\mathbb{F}_q^*\times \{1\}$, and therefore, the product of a subgroup of $\mathbb{F}_q^*$ by a set of $\hat{G}$ is well defined.
\begin{proposition}
    Let $H\leq \hat{G}$ be a subgroup. Then $H=Q_{\hat{H}} \cdot Jin(\hat{H})$
\end{proposition}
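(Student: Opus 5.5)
The plan is to prove the two inclusions $Q_{\hat{H}} \cdot Jin(\hat{H}) \subseteq H$ and $H \subseteq Q_{\hat{H}} \cdot Jin(\hat{H})$ directly from the group law of Proposition \ref{hatGisgroup}. Throughout, a Jineta $Jin_\psi(\hat{H})$ is fixed, given by a set map $\psi:\pi_2(H)\to\mathbb{F}_q^*$ with $(\psi(g),g)\in H$. Such a $\psi$ always exists: for each $g\in\pi_2(H)$ we pick any preimage in $H$. The product $Q_{\hat{H}}\cdot Jin(\hat{H})$ is read inside $\hat{G}$ via the identification $a\mapsto (a,1)$.

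The first preliminary step is to compute how $\mathbb{F}_q^*\times\{1\}$ multiplies against an arbitrary element. Since $\alpha$ is normalized and $1_G$ acts trivially, the operation gives
\[
(a,1)(b,g) = (a\,{}^{1}b\,\alpha(1,g),g) = (ab,g).
\]
So left multiplication by $(a,1)$ simply rescales the first coordinate.

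For the inclusion $\supseteq$, every element of $Q_{\hat{H}}\cdot Jin(\hat{H})$ has the form $(a,1)(\psi(g),g)$ with $(a,1)\in H$ and $(\psi(g),g)\in H$. It therefore lies in $H$ because $H$ is closed under products.

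For $\subseteq$, take $(b,g)\in H$. Then $g\in\pi_2(H)$, so $(\psi(g),g)\in H$. Set $a=b\psi(g)^{-1}\in\mathbb{F}_q^*$. By the computation above, $(a,1)(\psi(g),g)=(b,g)$. It remains to check that $a\in Q_{\hat{H}}$, i.e.\ that $(a,1)\in H$. This holds because
\[
(a,1) = (b,g)\,(\psi(g),g)^{-1},
\]
and both factors on the right lie in $H$. Verifying this identity is the only step needing care, and it follows from associativity together with the previous display:
\[
(a,1)(\psi(g),g) = (b,g) \;\Longrightarrow\; (a,1) = (b,g)(\psi(g),g)^{-1}.
\]
This avoids any need for the explicit inverse formula. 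Hence $(b,g)\in Q_{\hat{H}}\cdot Jin(\hat{H})$.

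The main obstacle is therefore not technical. It is only making sure the normalization $\alpha(1,g)=1$ and the triviality of the action of $1_G$ are invoked, so that $\mathbb{F}_q^*\times\{1\}$ acts by plain scalar multiplication on the first coordinate. Once that is recorded, both inclusions are one-line closure arguments.
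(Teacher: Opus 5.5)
Your proof is correct and takes essentially the same route as the paper: both inclusions are closure arguments, and for $(b,g)\in H$ you factor it as $(b\psi(g)^{-1},1)(\psi(g),g)$ and show that the first factor equals $(b,g)(\psi(g),g)^{-1}\in H$. The only difference is cosmetic: you get that identity by cancellation from $(a,1)(\psi(g),g)=(b,g)$, using that $\alpha$ is normalized and $1_G$ acts trivially, whereas the paper expands $(\psi(g),g)^{-1}$ with the explicit inverse formula, so your version is slightly cleaner.
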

\begin{proof}
    As $Q_{\hat{H}} , Jin(\hat{H}) \subseteq H$ we have that $Q_{\hat{H}} \cdot Jin(\hat{H}) \leq H$. To prove the other inclusion, fix $\psi $ such that $Jin(\hat{H}) = Jin_\psi (H)$. Let $(a,b)\in H$. If $(a,b)\in Jin(\hat{H})$ we have finished. If $(a,b) \not \in Jin(\hat{H})$, then there exist $(\psi(b),b) \in Jin(\hat{H}) \leq H$ and therefore $(a,b) (\psi(b),b)^{-1} \in H$. Then
    \begin{equation}
        (a,b) (\psi(b),b)^{-1}  = (a,b) \left(\frac{1}{(\psi(b)\alpha(b, b^{-1})^{b^{-1}})}, b^{-1} \right) = \left(\frac{a}{\psi(b)},1\right) \in H
    \end{equation}
    So $\left(\frac{a}{\psi(b)},1\right)  \in Q_{\hat{H}}$, and $\left(\frac{a}{\psi(b)},1\right)  \cdot (\psi(b),b) = (a,b)$ as we wanted to prove. 
\end{proof}

We can also produce the relation in the other direction.
\begin{lemma} \label{JinetaExtender}
    Let $H=Q_{\hat{H}} Jin_\psi(H) \leq \hat{G}$, and let $C$ be a set of generators of the cyclic components of $H$ . Then $\psi:H \longrightarrow \mathbb{F}_q^*$ is determined up to $Q_{\hat{H}}$ by the image of the elements of $C$. In addition, a map $\psi: C \longrightarrow \mathbb{F}_q^*$ with $(\psi(a),a)\in H   \quad \forall a\in C$ can be extended to a Jineta of $H$ if and only if for all $a,b\in C$, we have that
    \begin{align} 
        \frac{\psi(a)}{\psi(a)^b} \cdot \frac{\psi(b)^a}{\psi(b)} \cdot \frac{\alpha(a,b)}{\alpha(b,a)}& \in Q_{\hat{H}} \\
       \prod_{k=1}^{ord(a)-1}\alpha(a,a^k) \prod_{k=0}^{ord(a)-1} \psi(a)^{a^k}& \in Q_{\hat{H}} 
    \end{align}
    or equivalently
        \begin{align} \label{EqDef}
        \left(\frac{\psi(a)}{\psi(a)^b} \cdot \frac{\psi(b)^a}{\psi(b)} \cdot \frac{\alpha(a,b)}{\alpha(b,a)} \right) ^ { |Q_{\hat{H}}|} =& 1 \\
       \left(\prod_{k=1}^{ord(a)-1}\alpha(a,a^k) \prod_{k=0}^{ord(a)-1} \psi(a)^{a^k}\right) ^ { |Q_{\hat{H}}|} =&  1 
    \end{align}
\end{lemma}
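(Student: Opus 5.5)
We work in the quotient $\pi_2(H) \cong H/Q_{\hat{H}}$, an abelian group, since $Q_{\hat{H}} = H \cap (\mathbb{F}_q^* \times \{1\})$ is the kernel of $\pi_2|_H$. We read $C$ as a set of generators of the cyclic factors of $\pi_2(H)$, so that $\pi_2(H) = \prod_{a\in C}\langle a\rangle$ is a direct product and every $g\in\pi_2(H)$ has a unique expression $g=\prod_{a\in C} a^{k_a}$ with $0\le k_a<\operatorname{ord}(a)$. We also use the fact that $\mathbb{F}_q^*\times\{1\}$ is normal in $\hat{G}$, with $(x,g)(c,1)(x,g)^{-1} = (c^g,1)$.

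\emph{Uniqueness.} Suppose $\psi$ is a Jineta function. For each $g$, the element $(\psi(g),g)$ and the product $\prod_{a\in C}(\psi(a),a)^{k_a}$, taken in a fixed order, both lie in $H$ and both have second coordinate $g$. Hence they differ by an element of $Q_{\hat{H}}$. So $\psi(g)$ is determined modulo $Q_{\hat{H}}$ by the values $\psi(a)$, $a\in C$, which is the first claim.

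\emph{Necessity.} Suppose $\psi$ extends to a Jineta. For $a,b\in C$, the elements $(\psi(a),a)$ and $(\psi(b),b)$ lie in $H$, so their commutator lies in $H$. Because $G$ is abelian, this commutator has second coordinate $1$, so it lies in $Q_{\hat{H}}$. We compute its first coordinate via $(x,a)(y,b) = (x\,y^a\alpha(a,b),ab)$. Then
\[
(\psi(a),a)(\psi(b),b)\bigl((\psi(b),b)(\psi(a),a)\bigr)^{-1} = \left(\frac{\psi(a)\psi(b)^a\alpha(a,b)}{\psi(b)\psi(a)^b\alpha(b,a)},1\right),
\]
using that both products have the same second coordinate $ab=ba$. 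This gives the first condition. Similarly, $(\psi(a),a)^{\operatorname{ord}(a)}$ has second coordinate $1$ and lies in $H$. Expanding the power by induction gives
\[
(x,a)^m=\Bigl(\prod_{k=0}^{m-1}x^{a^k}\prod_{k=1}^{m-1}\alpha(a,a^k),\,a^m\bigr).
\]
For the exponents, this uses the normalised cocycle identity iteratively, where one must check that the $\alpha$-factors come out as $\alpha(a^k,a)$ or $\alpha(a,a^k)$ twisted appropriately. We expect this bookkeeping to be the main technical nuisance. If the literal statement needs $\alpha(a^k,a)$ instead, the cocycle relation converts one form into the other up to Galois twists. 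The result is the second condition.

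\emph{Sufficiency.} Given $\psi$ on $C$ satisfying both conditions, consider the subgroup $L=\langle Q_{\hat{H}}, (\psi(a),a) : a\in C\rangle$. It is contained in $H$ because each generator is. By normality of $Q_{\hat{H}}$ in $\hat{G}$, the first condition says the generators commute modulo $Q_{\hat{H}}$, and the second says $(\psi(a),a)^{\operatorname{ord}(a)}\in Q_{\hat{H}}$. Therefore $L/Q_{\hat{H}}$ is a quotient of $\prod_{a\in C}\langle a\rangle$. It also surjects onto $\pi_2(H)$ via $\pi_2$, so $\pi_2(L)=\pi_2(H)$. For each $g=\prod a^{k_a}$, we then define $\psi(g)$ as the first coordinate of $\prod(\psi(a),a)^{k_a}$. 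This lies in $H$ with second coordinate $g$, so it is a Jineta, and it extends the given values.

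\emph{Equivalent form.} The equivalent version \eqref{EqDef} follows because $Q_{\hat{H}}$ is a subgroup of the cyclic group $\mathbb{F}_q^*$. Such a subgroup is exactly the set of elements $x$ with $x^{|Q_{\hat{H}}|}=1$.
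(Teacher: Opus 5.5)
The gap is in the step you flagged yourself, the expansion of $(x,a)^m$, and it cannot be patched by ``converting up to Galois twists''. The rest of your argument is sound: the uniqueness step, the computation $(x,g)(y,g)^{-1}=(x/y,1)$ giving the first condition, the $|Q_{\hat H}|$-th power reformulation, and the sufficiency argument via the normal form $\prod_{a\in C}(\psi(a),a)^{k_a}$. That sufficiency argument is cleaner than the paper's word-rewriting definition of $\psi^*$, and it only uses the hypothesis $(\psi(a),a)\in H$.

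Write $(x,a)^k=(c_k,a^k)$. Multiplying on the right, $(c_k,a^k)(x,a)=(c_k\,x^{a^k}\alpha(a^k,a),a^{k+1})$, so the cocycle part of $(x,a)^m$ is $\prod_{k=1}^{m-1}\alpha(a^k,a)$. Multiplying on the left gives the same element, written as $\prod_{k=1}^{m-1}\alpha(a,a^k)^{a^{m-1-k}}$. The untwisted product $\prod_{k=1}^{m-1}\alpha(a,a^k)$ in the statement differs from this. For $m=3$ the discrepancy is the factor $\alpha(a,a)/\alpha(a,a)^{a}$, which need not lie in $Q_{\hat H}$. So necessity of the second condition, as written, does not follow.

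In fact that necessity is false. Take the following data:
\begin{itemize}
\item $q=p^3$ and $G=\langle a\rangle\cong C_3$, with $a$ acting by the Frobenius;
\item $t(1)=t(a)=1$ and $t(a^2)=s\in\mathbb{F}_q^*\setminus\mathbb{F}_p^*$;
\item the normalised coboundary $\alpha(g,h)=t(g)\,t(h)^g\,t(gh)^{-1}$.
\end{itemize}
Then $H=\{(t(g)^{-1},g): g\in G\}$ is a subgroup of $\hat G$ with $Q_{\hat H}=\{1\}$. The value $\psi(a)=1$ is the restriction of a Jineta, and indeed $\alpha(a,a)\alpha(a^2,a)=s^{-1}s=1$. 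Yet $\alpha(a,a)\alpha(a,a^2)=s^{-1}s^{p}=s^{p-1}\neq 1$, so the literal second condition fails.

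The paper's proof writes down the same expansion of $(\psi(a),a)^{\operatorname{ord}(a)}$ without justification. So the defect lies in the statement itself, not only in your write-up. What your argument does prove is one of the following:
\begin{itemize}
\item the lemma with $\prod_{k=1}^{\operatorname{ord}(a)-1}\alpha(a^k,a)$ in the second condition; or
\item the literal statement under the extra hypothesis that the values of $\alpha$ are fixed by $\sigma(\pi_2(H))$, as in the paper's example, where $\alpha$ takes values in $\mathbb{F}_3$.
\end{itemize}
In the second case, comparing $(1,a)^{k}(1,a)$ with $(1,a)(1,a)^{k}$ by associativity forces $\alpha(a^k,a)=\alpha(a,a^k)$ for all $k$, so the two products coincide.
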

\begin{proof}
    First of all, we have that $(\psi(a),a), (\psi(b),b) \in H$ so
    \begin{equation}
        (\psi(a),a), (\psi(b),b) = ( \psi(a) \psi(b)^a, ab) \in H
    \end{equation}
    As we have that $(\psi(ab),ab) \in H$, then by the definition of $Q_{\hat{H}}$, we have that $ \frac{\psi(ab)}{\psi(a) \psi(b)^a \alpha(a,b)} \in Q_{\hat{H}}$. The first results come from induction on the number of products. 

    For the second part of the proposition, we know that if $\psi$ gives a Jineta of $H$, then it must satisfy the previous identity. In addition, as $G$ is abelian, we have that $\psi(ab) = \psi(ba)$. Therefore, we have that 
    \begin{equation}
        \psi(a) \psi(b)^a \alpha(a,b) Q_{\hat{H}} = \psi(b) \psi(a)^b \alpha(b,a)Q_{\hat{H}}
    \end{equation}
 Which leads to 
    \begin{equation}
         \frac{\psi(a)}{\psi(a)^b} \cdot \frac{\psi(b)^a}{\psi(b)} \cdot \frac{\alpha(a,b)}{\alpha(b,a)}  \in Q_{\hat{H}}
    \end{equation}
    As $\mathbb{F}_q^*$ is a cyclic group, $Q_{\hat{H}}$ is the only subgroup of order $ |Q_{\hat{H}}|$, and it has all elements $x\in \mathbb{F}_q^*$ such that $x^{|Q_{\hat{H}}|}$. For the second identity, we must have that $\pi_2((\psi(a),a)^{ord(a)}) = a^{ord(a)} = 1$, so $(\psi(a),a)^{ord(a)}) \in Q_{\hat{H}}$. Therefore, we have that 
    \begin{equation}
        (\psi(a),a)^{ord(a)} = \prod_{k=0}^{ord(a)-1}(\psi(a),a) = \left(\prod_{k=1}^{ord(a)-1}\alpha(a,a^k) \prod_{k=0}^{ord(a)-1} \psi(a)^{a^k}, 1\right) \in Q_{\hat{H}} 
    \end{equation}
    Reasoning as in the previous case, we have that 
    \begin{equation}
        \left(\prod_{k=1}^{ord(a)-1}\alpha(a,a^k) \prod_{k=0}^{ord(a)-1} \psi(a)^{a^k}\right) ^ { |Q_{\hat{H}}|} =1
    \end{equation}

    Now, for the if part of the second statement, suppose that those two equations are satisfied. Then, all elements in $H$ can be written as a word in terms of $C$, $ a=\prod_{i=1}^n a_i$ and we can define
    
    \begin{align*}
    \psi^* :  H &  \longrightarrow \mathbb{F}_q^* / Q_{\hat{H}} \\
        a=\prod_{i=1}^n a_i &\longmapsto \psi^*\left( \prod_{i=1}^h a_i \right) = \prod_{i=1}^{n} \left( \psi(a_i)^{\prod_{j=1}^{i-1} a_j} \alpha\left(\prod_{j=1}^{i-1} a_j, a_i\right)\right) \quad \mod Q_{\hat{H}} 
    \end{align*}
    
    To check that this map is well-defined, it suffices to show that $\psi^*(AabB) = \psi^*(AbaB)$ for all $A, B \in G$ and $a, b \in C$. The first identity of equation (\ref{EqDef}) provide that the word of length $2$ are well defined. By induction, lets suppose that the words up to $n-1$ are well defined. Then, we have that
    
    \begin{align*}
        \psi^*(AabB) & \equiv_{Q_{\hat{H}}} \psi(A)\psi(a)^A \psi(b)^{Aa} \psi(B)^{Aa} \alpha(A,a)\alpha(Aa,b)\alpha(Aab,B) \\ 
        & \equiv_{Q_{\hat{H}}} \psi(A) \left(\psi(a) \psi(b)^{a}\right)^{A} \psi(B)^{Aab} \alpha(A,a)\alpha(Aa,b)\alpha(Aab,B) \\ 
        & \equiv_{Q_{\hat{H}}} \psi(A) \left(\psi(b) \psi(a)^{b} \frac{\alpha(b,a)}{\alpha(a,b)}\right)^{A} \psi(B)^{Aba} \alpha(A,a)\alpha(Aa,b)\alpha(Aab,B) \\ 
        & \equiv_{Q_{\hat{H}}} \psi(A) \psi(b)^A \psi(a)^{Ab}  \psi(B)^{Aba} \alpha(A,a)\alpha(Aa,b)\alpha(Aab,B) \left(\frac{\alpha(b,a)}{\alpha(a,b)}\right)^{A} \\ 
        & \equiv_{Q_{\hat{H}}} \psi(A) \psi(b)^A \psi(a)^{Ab}  \psi(B)^{Aba} \alpha(a,b)^A\alpha(A,ab)\alpha(Aab,B) \left(\frac{\alpha(b,a)}{\alpha(a,b)}\right)^{A}\\
        & \equiv_{Q_{\hat{H}}} \psi(A) \psi(b)^A \psi(a)^{Ab}  \psi(B)^{Aba} \alpha(a,b)^A\alpha(A,ba)\alpha(Aab,B)\left(\frac{\alpha(b,a)}{\alpha(a,b)}\right)^{A} \\ 
        & \equiv_{Q_{\hat{H}}} \psi(A) \psi(b)^A \psi(a)^{Ab}  \psi(B)^{Aba} \frac{\alpha(a,b)^A}{\alpha(b,a)^A}\alpha(A,b) \alpha(Ab,a)\alpha(Aab,B) \left(\frac{\alpha(b,a)}{\alpha(a,b)}\right)^{A}\\ 
        & \equiv_{Q_{\hat{H}}} \psi(A) \psi(b)^A \psi(a)^{Ab}  \psi(B)^{Aba} \alpha(A,b)\alpha(Ab,a)\alpha(Aba,B) \left(\frac{\alpha(a,b)}{\alpha(b,a)}\right)^{A} \\ 
        & \equiv_{Q_{\hat{H}}} \psi(AbaB)
    \end{align*}

    Therefore, we can define $\psi(a)$ as a representative of $\psi^*(a)$. Finally, it is trivial to see that $\{(\psi(a),a) ; a \in \pi_2(H)\} \subset H$ and therefore it define a Jineta for $H$
\end{proof}

\begin{proposition} \label{EcuacionesG'}
    Let $H\leq G$ and $Q\leq \mathbb{F}_q^*$ be a subgroup of $G$ and $\mathbb{F}_q^*$ respectively. Let $C$ be a set of generators of the cyclic components of $H$. If $\lambda$ is a generator of $\mathbb{F}_q^*$, we will denote $\mu : \mathbb{F}_q^* \longrightarrow \mathbb{Z}_{q-1}$ the homomorphism given by $\mu(\lambda)=1$ and extended multiplicatively. We will also denote by $\mu(a) = \mu(\lambda ^a)$.
    
     Then there exist $B \leq \hat{G}$ with $Q_B=Q, \pi_2(B) = H$ if and only if the following system of equations $\mod \frac{q-1}{|G|} $ on $\{\Psi(a) ; a\in C \}$ has a solution.
    \begin{align} \label{SysEqDadoGrupoandCylic}
        \Psi(a) (\mu(b)-1) + \Psi(b) ( \mu(a) -1) & \equiv  \mu(\alpha(a,b)) - \mu(\alpha(b,a))  &  \forall a,b\in C \\
        \Psi(a) \left( \frac{p^m-1}{\mu(a) -1} \right) & \equiv - \sum _{k=1}^{ord(a)-1} \mu(\alpha(a,a^k)) & \forall a\in C 
    \end{align}
    
\end{proposition}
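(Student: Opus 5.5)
The plan is to reduce the statement to Lemma \ref{JinetaExtender}, read additively through the discrete logarithm $\mu$. Here the action of $a\in G$ on $\mathbb{F}_q^*$ is a power of Frobenius, $x^a = x^{p^{m_a}}$. So the action is multiplication in $\mathbb{Z}_{q-1}$: $\mu(x^a)=p^{m_a}\mu(x)$. I read the notation $\mu(a)$ in the statement as this multiplier, i.e.\ $\mu(\lambda^a)=p^{m_a}$. Multiplicative conditions in $\mathbb{F}_q^*$ then become linear congruences in $\mathbb{Z}_{q-1}$. Since $\mathbb{F}_q^*$ is cyclic, membership in $Q$ becomes a congruence modulo $\frac{q-1}{|Q|}$, the index of $Q$. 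I would first check that this is the modulus the statement intends.

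For the forward direction, suppose $B\le\hat G$ has $Q_B=Q$ and $\pi_2(B)=H$. By the decomposition $B=Q_B\cdot Jin(B)$, there is a Jineta $\psi$, and by Lemma \ref{JinetaExtender} its restriction to $C$ satisfies the two membership conditions. Set $\Psi(a)=\mu(\psi(a))$ and take $\mu$ of the first condition. This gives
\[
\Psi(a)-\mu(b)\Psi(a)+\mu(a)\Psi(b)-\Psi(b)+\mu(\alpha(a,b))-\mu(\alpha(b,a))\equiv 0,
\]
which rearranges to the first congruence, up to a global sign convention I would fix when writing it out. For the second condition, the norm-like product $\prod_{k=0}^{ord(a)-1}\psi(a)^{a^k}$ has $\mu$ equal to $\Psi(a)\sum_k \mu(a)^k$. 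This geometric sum is what the statement writes as $\frac{p^m-1}{\mu(a)-1}$, with $p^m=\mu(a)^{ord(a)}$. Combined with $\sum_{k}\mu(\alpha(a,a^k))$, this yields the second congruence.

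For the converse, I would take a solution $\Psi$ and set $\psi(a)=\lambda^{\Psi(a)}$ for $a\in C$. I would then define $B$ as the subgroup of $\hat G$ generated by $Q\times\{1\}$ and the elements $(\psi(a),a)$ for $a\in C$. Clearly $\pi_2(B)=H$, since $C$ generates $H$. Also $Q\subseteq Q_B$, because $Q$ is contained in the kernel of $\pi_2$ restricted to $B$, and that kernel is central-ish: it is normalized, since $\mathbb{F}_q^*\times\{1\}$ is normal and $Q$ is Galois-stable. The main obstacle is the reverse inclusion $Q_B\subseteq Q$. Elements of $B\cap\ker\pi_2$ are words in the generators with trivial $G$-part. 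Modulo $Q$, their first coordinates are computed exactly by the map $\psi^*$ from the proof of Lemma \ref{JinetaExtender}, which is well defined into $\mathbb{F}_q^*/Q$ precisely because the congruences hold. I would then argue that $\psi^*$ induces a group homomorphism $H\to \mathbb{F}_q^*/Q$, twisted by $\alpha$. The key point is that relations in $H$ among the generators of $C$ are generated by commutators and the relations $a^{ord(a)}=1$, and these are exactly the cases covered by the two congruences. Any word with trivial image in $H$ therefore has first coordinate in $Q$, so $Q_B=Q$.

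Finally, I would check that the Galois-stability of $Q$ used above is automatic: every subgroup of a cyclic group is characteristic.
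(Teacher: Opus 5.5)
Your strategy is the paper's: push the two membership conditions of Lemma \ref{JinetaExtender} through $\mu$, reading $\mu(a)$ as the Frobenius multiplier and the modulus as $\frac{q-1}{|Q|}$ (the printed $|G|$ is a typo). For the converse you do more than the paper. The paper only asserts that $Q\cdot\{(\psi(a),a)\}$ is a subgroup with Quag $Q$, and it does so by invoking a lemma whose hypothesis already presupposes such a subgroup. Your version is the right argument: take $B=\langle Q\times\{1\},(\psi(a),a):a\in C\rangle$, and get $Q_B=Q$ because the images $\bar x_a\in\hat G/(Q\times\{1\})$ satisfy the defining relations $x_a^{ord(a)}=1$, $[x_a,x_b]=1$ of $H=\prod_{a\in C}\langle a\rangle$. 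So $a\mapsto\bar x_a$ is a homomorphic section of $\pi_2$ over $H$ with image $B/(Q\times\{1\})$.

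The trouble is in the two places where you accept the printed formulas. There the statement as printed is false, so no proof of it can close.

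First, the discrepancy in the first congruence is not a global sign. Your own computation gives
\[
\Psi(a)(\mu(b)-1)-\Psi(b)(\mu(a)-1)\equiv \mu(\alpha(a,b))-\mu(\alpha(b,a)),
\]
which is antisymmetric in $(a,b)$, like its right-hand side. The printed left-hand side is symmetric, and at $a=b$ it imposes the spurious condition $2\Psi(a)(\mu(a)-1)\equiv 0$. Take $G=C_2=\langle a\rangle$ acting on $\mathbb{F}_9$ by $x\mapsto x^3$, with $\alpha(a,a)=-1=\lambda^4$ and $Q=1$. The printed system forces $\Psi(a)$ to be both even and odd, yet $B=\{(1,1),(\lambda,a)\}$ exists.

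Second, the claim that the second congruence is exactly the relation $(\psi(a),a)^{ord(a)}\in Q\times\{1\}$ fails when $a$ acts nontrivially. Multiplying on the right gives $(x,a)^n=\bigl(\prod_{k=0}^{n-1}x^{a^k}\prod_{k=1}^{n-1}\alpha(a^k,a),\,a^n\bigr)$. The lemma's untwisted factor $\prod_k\alpha(a,a^k)$ is wrong; left multiplication gives the twisted factors $\alpha(a,a^k)^{a^{n-1-k}}$. Take $C_3=\langle a\rangle$ acting on $\mathbb{F}_8$ by $x\mapsto x^2$, with the coboundary $\alpha=\delta t$ where $t(a)=1$ and $t(a^2)=\lambda$. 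Every $(x,a)$ has order $3$, so $B=\langle(x,a)\rangle$ has $Q_B=1$. But the printed second congruence reads $7\Psi(a)\equiv -1 \pmod 7$, which has no solution.

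The paper's proof glosses over both points in the same way. Correct the statement with the minus sign, the modulus $\frac{q-1}{|Q|}$, and $\alpha(a^k,a)$ in place of $\alpha(a,a^k)$. With those changes your argument goes through verbatim.
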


\begin{proof}
 To prove this, let $\psi(a) = \lambda^{\Psi(a)}$. Then the first equation is the first equation of (\ref{EqDef}), while for the second one, note that
 \begin{equation}
      \frac{p^m-1}{\mu(a) -1} = 1 + \mu(a) + \mu(a)^2 + \cdots + \mu(a)^{ord(a)-1}
 \end{equation}
 So our new second equation is the second equation of (\ref{EqDef}). Therefore, $\psi$ can be extended from $C$ to $\langle C \rangle = H$, and the set $B=Q\cdot \{(\psi(a),a) ; a\in H\} $ is a group that satisfies our conditions. 
\end{proof}

Finally, we need a condition for the cyclicity of a quotient in $\hat{G}$. We have the following easy result.

\begin{lemma}
    Let $\hat{H}\leq \hat{G}$ be a subgroup of $\hat{G}$, let $H=Jin_\psi(\hat{H})$ and $Q=Q_{\hat{H}}$, and $\lambda$ the generator of $\mathbb{F}_q^*$. Then $\pi_2' : \hat{G} / \hat{H} \longrightarrow G/H$ given by $\pi_2((a,b)\hat{H}) = bH$ is an epimorphism of groups.
\end{lemma}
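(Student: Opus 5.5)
The map $\pi_2'$ should be the map induced by the projection $\pi_2 : \hat{G} \longrightarrow G$ from the Corollary. The notation needs a little care: $H = Jin_\psi(\hat{H})$ is a subset of $\hat{G}$, but $G/H$ only makes sense for a subgroup of $G$. I will therefore read $H$ as $\pi_2(\hat{H}) \le G$, the set of second coordinates of the Jineta. Since $G$ is abelian, $\pi_2(\hat{H})$ is normal in $G$, so $G/H$ is a group. Consistently with the statement, I also assume $\hat{H}$ is normal in $\hat{G}$, so that $\hat{G}/\hat{H}$ is a group as well.

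First, $\pi_2'$ is well defined. Take $(a,b)\hat{H} = (a',b')\hat{H}$. Then $(a,b)^{-1}(a',b') \in \hat{H}$, and applying the homomorphism $\pi_2$ gives $b^{-1}b' \in \pi_2(\hat{H}) = H$, so $bH = b'H$. This uses only that $\pi_2$ is a group homomorphism, which was checked in the Corollary. The inverse formula from Proposition \ref{hatGisgroup} is not needed, since $\pi_2((x,y)^{-1}) = y^{-1}$ already follows from $\pi_2$ being a homomorphism.

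Second, $\pi_2'$ is a homomorphism. It is the composition of $\pi_2$ with the canonical projection $G \longrightarrow G/H$. Directly,
\[
\pi_2'\big((a,b)\hat{H}\,(c,d)\hat{H}\big) = \pi_2'\big((a\,c^{b}\,\alpha(b,d),\, bd)\hat{H}\big) = bdH = (bH)(dH).
\]

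Third, $\pi_2'$ is surjective. Given $bH \in G/H$, the class $(1,b)\hat{H}$ maps to it, because the element $(1,b)$ lies in $\hat{G} = \mathbb{F}_q^* \times G$.

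The only real obstacle is the one already flagged: interpreting $H$ as a subgroup of $G$ and assuming normality of $\hat{H}$ so that both quotients exist. Once that is settled, the argument is just the standard fact that a surjective homomorphism $\pi_2$ sending $\hat{H}$ into $H$ induces an epimorphism $\hat{G}/\hat{H} \longrightarrow G/H$. It would also be worth noting the kernel: it is $\pi_2^{-1}(H)/\hat{H} = (\mathbb{F}_q^* \cdot \hat{H})/\hat{H} \cong \mathbb{F}_q^*/Q$. This is presumably what the cyclicity criterion that follows will use.
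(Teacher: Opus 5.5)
Your proof is correct and follows essentially the paper's route: the paper forms $\gamma=\pi_H\circ\pi_2\colon\hat{G}\to G/H$, notes that $\hat{H}\subseteq\ker\gamma$, and takes the induced map on $\hat{G}/\hat{H}$, which is your argument written without coordinates. Your explicit surjectivity check via $(1,b)$, and your caveats that $H$ must be read as $\pi_2(\hat{H})$ and that $\hat{H}$ must be normal, fill in points the paper leaves implicit.
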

\begin{proof}
If we denote by $\pi_H $ the projection $G$ over $G/H$, then we have the diagram
    \[
\begin{tikzcd}[row sep=large, column sep=large]
\hat{G} \arrow[r, "\pi_2"] \arrow[rd, dashed, "\gamma"'] & G \arrow[d, "\pi_H"] \\
& G/H
\end{tikzcd}
\]

where $\gamma$ is an homomorphism from $\hat{G}$ to $G/H$. As $\hat{H} \subseteq\ker(\gamma)$, then $g$ induces a homomorphism of group $\gamma':\hat{G}/\hat{H} \longrightarrow G/H$, given by $\gamma'((a,b)\hat{H}) = bH$. Therefore $\pi_2 = \gamma'$ and we have the desired result.
\end{proof}

\begin{corollary}
        Let $\hat{H}\leq \hat{G}$ be a subgroup of $\hat{G}$, let $H=Jin_\psi(\hat{H})$ and $Q=Q_{\hat{H}}$, and $\lambda$ be the generator of $\mathbb{F}_q^*$.If $\hat{G}/ \hat{H}$ is cyclic, then $G/H$ is cyclic, and $\hat{G}/ \hat{H}$ is generated by $(a,t)\hat{H}$, where $t H$ is some generator of $G/H$. 
\end{corollary}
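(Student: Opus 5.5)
The corollary follows from the preceding lemma, which gives an epimorphism $\pi_2' : \hat{G}/\hat{H} \longrightarrow G/H$, $(a,b)\hat{H}\mapsto bH$. Here I read $H$ as $\pi_2(\hat{H})$, the set of second coordinates of the Jineta, so that $\hat{H}\subseteq \ker(\pi_H\circ\pi_2)$ and the lemma applies. The plan has three steps: use that a homomorphic image of a cyclic group is cyclic, then lift a generator of $G/H$, and finally pin down the form of a generator of $\hat{G}/\hat{H}$.

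\textbf{Step 1 (cyclicity of $G/H$).} Suppose $\hat{G}/\hat{H}=\langle x\hat{H}\rangle$. Every $bH\in G/H$ is the image of some $(a,b)\hat{H}$, which is a power $x^k\hat{H}$. Hence $bH=\pi_2'(x\hat{H})^k$, so $G/H$ is generated by $\pi_2'(x\hat{H})$ and is cyclic.

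\textbf{Step 2 (form of the generator).} Write the chosen generator as $x=(a,t)$ with $a\in\mathbb{F}_q^*$ and $t\in G$. Then $\pi_2'(x\hat{H})=tH$, and Step 1 shows that $tH$ generates $G/H$. So $\hat{G}/\hat{H}$ is generated by $(a,t)\hat{H}$ with $tH$ a generator of $G/H$, which is exactly the claim.

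\textbf{Step 3 (optional strengthening).} If the statement is meant as ``for some prescribed generator $tH$ there is an $a$'', I would argue as follows. Every generator of a cyclic group $G/H$ of order $m$ is $t_0^{j}H$ for some $j$ coprime to $m$, where $t_0H=\pi_2'(x\hat{H})$. Any $t$ with $tH=t_0^jH$ lifts to $x^j$ times an element of $\hat{H}$, and this has the form $(a',t)$ modulo $\hat{H}$. I would then need $x^j$ to still generate $\hat{G}/\hat{H}$, which requires $j$ coprime to $|\hat{G}/\hat{H}|$, not just to $m$. By Dirichlet's theorem one can choose $j$ in its residue class mod $m$ coprime to the larger order, since coprimality to $m$ allows adjusting by multiples of $m$.

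This lifting of an arbitrary generator is the only delicate point. The literal statement needs only Steps 1 and 2, which are immediate.
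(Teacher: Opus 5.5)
Your Steps 1 and 2 are correct and match the paper's proof: you push a generator $(a,t)\hat H$ of $\hat G/\hat H$ through the epimorphism $\pi_2'$ of the preceding lemma, so $G/H$ is cyclic and generated by $tH$. Step 3 is not needed for the statement as written, and the paper does not attempt it, though your argument for it is valid; you can also replace Dirichlet by the elementary fact that reduction $(\mathbb{Z}/M\mathbb{Z})^\times \to (\mathbb{Z}/m\mathbb{Z})^\times$ is surjective for $M=|\hat G/\hat H|$ and $m \mid M$.
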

\begin{proof}
 First, suppose that $\hat{G}/\hat{H}$ is cyclic. Then $G/H =\pi'_2(\hat{G}/ \hat{H})$ is also cyclic. In addition, if $(a,t)\hat{H}$, clearly $ \pi_2'( (a,t) \hat{H}) =t H$ is some generator of $G/H$.

\end{proof}

\begin{lemma}
    Let $H\leq K$ abelian groups such that $K/H$ is cyclic. Then $K/H$ is generated by some element $t\in K$ or order $K/H$ 
\end{lemma}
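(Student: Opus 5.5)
The plan is to take a lift $t\in K$ of a generator of $K/H$ and then adjust it inside the coset $tH$ to control its order. The statement cannot hold as literally written, so the plan also identifies the correct hypothesis and proves the statement under it.

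\textbf{Step 1 (what always holds).} Put $m=[K:H]$ and choose any $t_0\in K$ with $\langle t_0H\rangle=K/H$. The order of $t_0H$ in $K/H$ is $m$, and it divides $\operatorname{ord}(t_0)$. So every lift of a generator has order a multiple of $m$. The task is to find, inside the coset $t_0H$ (or among lifts of some other generator $t_0^{j}H$ with $\gcd(j,m)=1$), an element whose order is exactly $m$.

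\textbf{Step 2 (reduce to $p$-groups).} Decompose $K=\prod_p K_p$ into Sylow subgroups. Then $H=\prod_p H_p$ and $K/H\cong\prod_p K_p/H_p$, and each factor $K_p/H_p$ is cyclic of order $m_p$, the $p$-part of $m$. If we find $t_p\in K_p$ with $\langle t_pH_p\rangle=K_p/H_p$ and $\operatorname{ord}(t_p)=m_p$, then $t=\prod_p t_p$ has order $\prod_p m_p=m$ and $tH$ generates $K/H$. So it suffices to treat $K$ a finite abelian $p$-group.

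\textbf{Step 3 (the $p$-group case, and the obstacle).} The main obstacle is here. An element $t\in K$ of order $m$ with $\langle tH\rangle=K/H$ satisfies $\langle t\rangle\cap H=1$, because $|\langle t\rangle|=|\langle tH\rangle|=m$. Hence $K=H\times\langle t\rangle$. Conversely, a cyclic complement to $H$ gives such a $t$. So the statement is equivalent to $H$ having a complement in $K$, which is not automatic. For example, with $K=C_4$ and $H=C_2$, the quotient $K/H\cong C_2$ is cyclic, but the only element of order $2$ lies in $H$. The statement therefore needs a splitting hypothesis, such as $H$ being pure in $K$ or $\gcd(|H|,[K:H])=1$.

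Under purity, the argument is as follows. Write $t_0^{m}=h\in H$. Purity gives $h=h_1^{m}$ for some $h_1\in H$. Then $t=t_0h_1^{-1}$ lies in the coset $t_0H$ and satisfies $t^m=1$, so its order is exactly $m$. In the coprime case, replace $t_0$ by its $m$-primary component, which still generates $K/H$ and has order dividing $m$, hence equal to $m$.

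\textbf{How it will be used.} In the application to $\hat G$ (where $\hat G/\hat H$ is cyclic), I will check the splitting hypothesis directly, or else use only the weaker conclusion of Step 1, namely that $m$ divides $\operatorname{ord}(t)$.
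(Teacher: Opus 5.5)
Your assessment is correct, and the paper's proof is not. The paper intends the order of $t$ to be taken in $K$: its proof builds $\lambda\in K$ ``of order $|K/H|$'', and the proposition on $E_{\hat{G}}(K/H)$ later takes ``$t\in K$ of order $[K:H]$''. Reading it as the order in $K/H$ would make the lemma vacuous. Under the intended reading the statement is false, and $K=C_4$, $H=C_2$ is a valid counterexample.

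The paper's proof misses this in two places.

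First, it writes $H=\prod_{i,j}C_{p_i^{h_{i,j}}}$ factorwise inside a cyclic decomposition $K=\prod_{i,j}C_{p_i^{k_{i,j}}}$, which need not be possible. Take $K=C_p\times C_{p^3}$ and $H=\langle(1,p)\rangle$. The only cyclic subgroup of order $p^2$ that is factorwise in some cyclic decomposition of $K$ is $pK=\langle(0,p)\rangle\neq H$. Here $K/H\cong C_{p^2}$ again has no lift of order $p^2$.

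Second, even when an $H$-adapted decomposition exists, the paper's choice of element fails. An element of order $p^{k-h}$ in $C_{p^k}$ lies in the subgroup of order $p^h$ as soon as $k\le 2h$. In $C_4\supset C_2$ the paper's $\lambda$ is the nonzero element of $H$. The construction works only when each factor with $h_{i,j}\neq k_{i,j}$ has $h_{i,j}=0$, i.e.\ when $H$ is a direct factor of $K$. That is exactly the splitting hypothesis your Step 3 isolates.

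Your repaired version is sound. The equivalence in Step 3 is correct: a lift of order $m$ exists iff $H$ has a complement, which is then necessarily cyclic. The purity argument is also correct. Since a direct factor is always pure, your argument actually gives a sharp criterion: when $K/H$ is cyclic, the conclusion holds iff $H$ is pure in $K$.

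The coprime case needs no separate argument. If $\gcd(|H|,m)=1$, then $h\mapsto h^m$ is bijective on $H$, so $H$ is automatically $m$-pure.

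Your caution about the application is warranted. In the paper's example, $G=C_2\times C_2\times C_5$ has square-free exponent, so every subgroup is a direct factor and the lemma's conclusion does hold there. In general, though, the proposition on $E_{\hat{G}}(K/H)$ passes from a coset equality to membership in $Q_{\hat{H}}$ by silently replacing $t^{3^k}$ with $t$. That step needs $\operatorname{ord}(t)\mid 3^k-1$, not merely $[K:H]\mid 3^k-1$.
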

\begin{proof}
    Decompose in cyclic components $K= \prod_{i,j} C_{p_i^{k_{i,j}}}$ with $p_i$ prime, $k_{i,j}\in \mathbb{N}$. Then $H=\prod_{i,j} C_{p_i^{h_{i,j}}}$ with $h_{i,j} \leq k_{i,j}$ and $h_{i,j} \not = k_{i,j}$ for at most one $j$ for each $i$. Let $\lambda=(\lambda_{i.j}) \in K$ given by $\lambda_{i,j}=1$ for all $i,j$ such that $h_{i,j}  = k_{i,j}$ and $\lambda_{i,j}$ of order $p^{k_{i,j}-h_{i,j}}$, which is possible as each component is cyclic. Then $\lambda$ has order $|K/H|$, and generate the quotient $K/H$.
\end{proof}

\begin{lemma}
      Let $\hat{H}\leq \hat{G}$ be a subgroup of $\hat{G}$ and let $tH$ be a generator of the cyclic group $G/H$. Then $\hat{G}/\hat{H}$ is cyclic if and only if it is generated by an element of the form $(a,t^k) \hat{H}$ for some $a\in \mathbb{F}_q^*, k \in \{0,\cdots, ord(t)-1\}$
\end{lemma}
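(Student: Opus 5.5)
The \emph{if} direction is immediate: a quotient generated by a single coset is cyclic by definition. So the content lies in the \emph{only if} direction. Throughout, I read $H$ as $\pi_2(\hat{H})$, consistent with the preceding lemma, so that $\pi_2'\colon \hat{G}/\hat{H}\to G/H$, $(a,b)\hat{H}\mapsto bH$, is a well-defined epimorphism. I also assume, as the statement implicitly requires, that $\hat{H}$ is normal in $\hat{G}$ so that the quotient is a group.

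Suppose $\hat{G}/\hat{H}$ is cyclic, generated by $(x,g)\hat{H}$. By the preceding corollary, $gH = \pi_2'((x,g)\hat{H})$ generates $G/H$. Since $tH$ also generates $G/H$, we get $gH = t^kH$ for some $k\in\{0,\dots,ord(t)-1\}$, obtained by reducing the exponent modulo $ord(t)$ (note $ord(tH)$ divides $ord(t)$). Write $g = t^k h$ with $h\in H = \pi_2(\hat{H})$.

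Next I will replace the second coordinate $g$ by $t^k$ without changing the coset. Choose some element $(c,h)\in\hat{H}$, which exists because $h\in\pi_2(\hat{H})$; for instance one can take $c=\psi(h)$ using a Jineta. Then $(x,g)(c,h)^{-1}$ lies in $(x,g)\hat{H}$. Using the formula for the inverse from Proposition \ref{hatGisgroup}, its second coordinate is $gh^{-1}=t^k$, and its first coordinate is some $a\in\mathbb{F}_q^*$, namely $x$ times a twisted factor involving $c^{-1}$ and values of $\alpha$. Hence $(x,g)\hat{H} = (a,t^k)\hat{H}$, which is a generator of the required form.

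The only real obstacle is bookkeeping. First, one has to check that the first coordinate of $(x,g)(c,h)^{-1}$ is genuinely an element of $\mathbb{F}_q^*$; this is automatic, since $\hat{G}$ is a group with underlying set $\mathbb{F}_q^*\times G$. Second, one must be clear that $H$ denotes $\pi_2(\hat{H})$ rather than the Jineta itself; I will state this identification explicitly. With these settled, the rest is the routine computation above.
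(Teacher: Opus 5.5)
Your proof is correct and follows essentially the same route as the paper: you project the generator to $G/H$, write its second coordinate as $t^k h$ with $h\in\pi_2(\hat{H})$, and absorb $h$ using an element of $\hat{H}$ lying over $h$. The only difference is cosmetic. The paper multiplies on the left by the Jineta element $(\psi(h),h)$ and gives the first coordinate explicitly as $a=\bigl(\frac{x}{\psi(h)\alpha(h,t^k)}\bigr)^{h^{-1}}$, whereas you multiply on the right by $(c,h)^{-1}$, which yields the coset equality $(x,g)\hat{H}=(a,t^k)\hat{H}$ directly.
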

\begin{proof}
    Suppose that $\hat{G}/\hat{H}$ is cyclic. Then it is generated by $(x,b) \hat{H}$. Apply $\pi_2'$ and we get that $bH= t^kH$ for some $k$, that is, $b= t^kh$ with $h\in H$. Then let $a=(\frac{x}{\psi(h)\alpha(h,t^k)})^{h^{-1}}$; therefore, we have that $[(a,t^k)] = [(\psi(h),h)][((\frac{x}{\psi(h)\alpha(h,t^k)})^{h^{-1}},t^k)]  = [(x,ht^k)] = [(x,b)]$
\end{proof}

\begin{corollary}
        Let $\hat{H}\leq \hat{G}$ be a subgroup of $\hat{G}$, let $H=Jin_\psi(\hat{H})$ and $Q=Q_{\hat{H}}$, and $\lambda$ the generator of $\mathbb{F}_q^*$.Then $\hat{G}/ \hat{H}$ is cyclic if and only if there exist $t\in G, a \in \mathbb{F}_q^*$ such that $t H$ is some generator of $G/H$ and $[(\lambda,1)] \in [(a,t)]$. Furthermore, such $t$ exist if and only if 
            \begin{equation} \label{eqCyclic}
          1 - \alpha_{|G/H |}'(t) - \mu( \alpha(t^{|G/H |},t^{-|G/H |}))-\Psi(t^{|G/H |})\mu(t^{|G/H |})= N_{|G/H |}(t) x  \mod{ \frac{q-1}{|Q|}}
    \end{equation}
    has a solution, in which case $a=\lambda^x$
\end{corollary}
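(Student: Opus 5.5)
The plan has two parts: a structural reduction, then a translation into a congruence through $\mu$. For the first part I use the preceding lemma. If $\hat{G}/\hat{H}$ is cyclic, it is generated by some $(a,t^k)\hat{H}$ with $tH$ a generator of $G/H$. Since $\pi_2'$ is onto, $t^kH$ must itself generate $G/H$, so after renaming $t:=t^k$ the generator has the form $(a,t)\hat{H}$. Then $(\lambda,1)\hat{H}$ lies in the cyclic group $\langle (a,t)\hat{H}\rangle$, which is what the bracket condition $[(\lambda,1)]\in[(a,t)]$ expresses.

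Conversely, suppose $tH$ generates $G/H$ and $(\lambda,1)\hat{H}$ is a power of $(a,t)\hat{H}$. Every class $(b,g)\hat{H}$ can be written as $(b',1)\hat{H}\cdot(\psi(h),h)\hat{H}\cdot(1,t^j)\hat{H}$, by the decomposition $\hat{H}=Q_{\hat{H}}\,Jin(\hat{H})$ and $g=ht^j$. The factor $(\psi(h),h)$ lies in $\hat{H}$, and $(b',1)$ is a power of $(\lambda,1)$. Hence everything lies in $\langle(a,t)\hat{H}\rangle$, which proves the first equivalence.

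For the second part, let $m=|G/H|$ and compute $(a,t)^m$ in $\hat{G}$ by iterating the product rule. The result is
\[
(a,t)^m=\Bigl(\prod_{k=0}^{m-1}a^{t^k}\prod_{k=1}^{m-1}\alpha(t,t^k),\;t^m\Bigr).
\]
Since $t^m\in H$, I multiply by $(\psi(t^m),t^m)^{-1}$, using the inverse formula from Proposition~\ref{hatGisgroup}, to land in $\mathbb{F}_q^*\times\{1\}$. This gives an element $(c(a),1)$ with $(a,t)^m\hat{H}=(c(a),1)\hat{H}$. The cyclicity condition then amounts to requiring that the powers of $(a,t)$ produce $(\lambda,1)$ modulo $Q$, and the decisive step is to arrange that $(a,t)^m\equiv(\lambda,1)$ modulo $\hat{H}$.

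Applying $\mu$ converts this into additive form. Write $a=\lambda^x$ and $t$ acting as $\lambda\mapsto\lambda^{\mu(t)}$. The norm-like factor $\prod a^{t^k}$ becomes $x\,N_m(t)$ with $N_m(t)=\sum_{k=0}^{m-1}\mu(t)^k$. The cocycle product becomes $\alpha'_m(t):=\sum_k\mu(\alpha(t,t^k))$. The inverse of $(\psi(t^m),t^m)$ contributes $-\mu(\alpha(t^m,t^{-m}))-\Psi(t^m)\mu(t^m)$. The target is $\mu(\lambda)=1$, and all of this is read modulo $(q-1)/|Q|$ because we work modulo $Q_{\hat{H}}$. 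Rearranging gives exactly \eqref{eqCyclic}, and $a=\lambda^x$ for any solution $x$.

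I expect the main obstacle to be the justification that $(\lambda,1)$ can be taken to be exactly $(a,t)^m$ modulo $\hat{H}$, rather than merely some power of $(a,t)$. I would first handle this by an order count. Because $tH$ has order exactly $m$, any power $(a,t)^r$ lying in $\mathbb{F}_q^*\hat{H}$ must have $m\mid r$. Hence $(\lambda,1)\hat{H}\in\langle(a,t)^m\hat{H}\rangle$, and $(a,t)^m\hat{H}$ generates $\mathbb{F}_q^*\hat{H}/\hat{H}$.

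I would then absorb the discrepancy by rescaling $a$. Replacing $a$ by $a\lambda^{y}$ shifts the exponent by $y\,N_m(t)$, which lets me arrange the exact equality. If this rescaling fails to reach every generator, I would fall back on stating the congruence up to a unit multiple, which is the same solvability condition.

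The remaining bookkeeping is sign and index care in the products, which I would verify via the cocycle identity in the same way as in Lemma~\ref{JinetaExtender}.
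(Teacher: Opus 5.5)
Your first equivalence and your overall plan follow the paper: force $(a,t)^m\hat H=(\lambda,1)\hat H$ with $m=|G/H|$, then apply $\mu$. You also rightly flag the step the paper's proof skips when it simply asserts $k=|G/H|$. But your repair of that step fails.

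Put $n_0=(q-1)/|Q|$. The order count only gives $(a,t)^m\hat H=(\lambda^u,1)\hat H$ with $u$ a unit modulo $n_0$. Replacing $a$ by $a\lambda^y$ moves $u$ only within the coset $u+\gcd(N_m(t),n_0)\mathbb{Z}$, and a unit need not be $\equiv 1$ modulo that gcd. Here is a concrete counterexample:
\begin{itemize}
\item Take $q=4$, $G=\langle t\rangle\cong C_3$ acting trivially, $\alpha(t^i,t^j)=\lambda^{2\lfloor (i+j)/3\rfloor}$ for $0\le i,j\le 2$, and $\hat H=1$.
\item Then $(1,t)^3=(\lambda^2,1)$, so $\hat G\cong C_9$ is cyclic and every $(a,t)$ generates it.
\item Yet $(a,t)^3=(a^3\lambda^2,1)=(\lambda^2,1)$ for all $a\in\mathbb{F}_4^*$. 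For this $t$ the congruence reads $1-2\equiv 3x\pmod 3$, which is unsolvable.
\item Rescaling $a$ is useless here because $N_3(t)=3\equiv 0 \pmod 3$.
\end{itemize}
Your fallback, a congruence ``up to a unit multiple'', is a strictly weaker condition than the one in the statement, not the same one.

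The missing idea is that you must be allowed to change $t$, not only $a$. Suppose $(a,t)\hat H$ generates and $(a,t)^m\hat H=(\lambda^u,1)\hat H$. By the Chinese remainder theorem, choose $j\equiv u^{-1}\pmod{n_0}$ with $\gcd(j,m)=1$. Then $(a,t)^j=(a',t^j)$ still generates $\hat G/\hat H$, $t^jH$ generates $G/H$, and $(a',t^j)^m\hat H=(\lambda,1)\hat H$. In the example, $(1,t^2)^3=(\lambda,1)$. So the second assertion holds only with $t$ quantified existentially over the generators of $G/H$. For a prescribed generator it is false, and that is the version your argument, which never changes $t$, would be proving.

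There are also two bookkeeping errors:
\begin{itemize}
\item The cocycle factor of $(a,t)^m$ is $\prod_{k=1}^{m-1}\alpha(t^k,t)$. This agrees with $\prod_{k=1}^{m-1}\alpha(t,t^k)$ when $\sigma(t)$ fixes these cocycle values (e.g.\ trivial action), but not in general.
\item Right multiplication by $(\psi(t^m),t^m)^{-1}$ contributes exactly $-\Psi(t^m)$, which modulo $n_0$ equals $\mu(\alpha(t^m,t^{-m}))+\mu(t^m)\Psi(t^{-m})$. So the congruence you can actually derive has $\Psi(t^{-m})$ where the statement has $\Psi(t^{m})$, and your claim that the computation lands on it exactly does not hold up.
\end{itemize}
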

\begin{proof}
    The only if is direct, and for the other implication we have that if $[(\lambda,1)] \in \langle [(a,t)] \rangle$ then all elements of the form $[(b,t^k)]$ can be written as $[(b,t^k)] = [(b/(a^{1+t+\cdots + t^{k-1}} \prod_{h=1}^{k-1}(t,t^h),1 ] [(a,t)]^k \in \langle [(a,t)] \rangle$

    Now, $[(\lambda,1)] \in \langle [(a,t)] \rangle$ if and only if $\exists k$ such that $(\lambda,1) = (a,t)^k (c,d)$ such that $(c,d) \in \hat{H}$. Therefore, we have that $k=|G/H |, d=t^{-k}$, and we have to solve $\lambda \in Q a^{N_k(t)} \alpha_k^*(t)\alpha(t^k,t^{-k})\psi(t^{-k})^{t^k}$. This is equivalent to solving the system
    \begin{equation}
        1 = N_k(t) \mu(a) + \alpha_k'(t)+ \mu( \alpha(t^k,t^{-k}))+\Psi(t^{-k})\mu(t^k) \mod{ \frac{q-1}{|Q|}}
    \end{equation}
\end{proof}

Finally, computing the Wedderburn decomposition requires the normalizer of a subgroup, as well as the group $E_{\hat{G}}(K/H)$ associated with each Shoda pair $(K,H)$. To this end, we establish thee straightforward results.

\begin{lemma}
     Let $H=Q_{\hat{H}} \cdot Jin_\psi(H) \leq \hat{G}$ be a subgroup. Then $\sigma(\pi_2(H)) = \left \langle \gamma \right \rangle$ for some $\gamma \in Aut(\mathbb{F}_q)$
\end{lemma}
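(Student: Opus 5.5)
The statement follows because $\sigma(\pi_2(H))$ is a subgroup of the cyclic group $\mathrm{Aut}(\mathbb{F}_q)$. I will prove that in three short steps.

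First, I will check that $\pi_2(H)$ is a subgroup of $G$. By the corollary showing that $\hat{G}$ is metabelian, the projection $\pi_2:\hat{G}\longrightarrow G$ is a group epimorphism. The image of a subgroup under a homomorphism is a subgroup, so $\pi_2(H)\leq G$. The decomposition $H=Q_{\hat{H}}\cdot Jin_\psi(H)$ is not actually needed here; it only identifies $\pi_2(H)$ as the set of second coordinates of the Jineta.

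Second, I will use that the action $\sigma:G\longrightarrow \mathrm{Aut}(\mathbb{F}_q)$ is a group homomorphism. This is implicit in the definition of the twisted skew group algebra, since $b^g=\sigma(g)(b)$ is an action. Hence $\sigma(\pi_2(H))$ is a subgroup of $\mathrm{Aut}(\mathbb{F}_q)$.

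Third, I will invoke the fact recalled at the start of the section on twisted skew group rings as quotients: $\mathrm{Aut}(\mathbb{F}_{p^n})=\mathrm{Gal}(\mathbb{F}_{p^n}/\mathbb{F}_p)\cong C_n$, generated by the Frobenius map $x\mapsto x^p$. Every subgroup of a cyclic group is cyclic. Concretely, a subgroup of $C_n$ has order $d\mid n$ and is generated by $\mathrm{Frob}^{n/d}$. So $\sigma(\pi_2(H))=\langle\gamma\rangle$ with $\gamma=\mathrm{Frob}^{n/d}$, that is, $\gamma(x)=x^{p^{n/d}}$.

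There is no real obstacle. The only point needing care is that $\sigma$ must be a homomorphism rather than an arbitrary map; if only a set map were given, its image need not be closed. I will state explicitly that the action hypothesis gives this. As a remark, I may add that the fixed field of $\gamma$ is $\mathbb{F}_{p^{n/d}}$, since that is presumably how the lemma will be used to compute $E_{\hat{G}}(K/H)$.
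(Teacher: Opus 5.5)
Your proof is correct and is essentially the paper's argument: $\sigma(\pi_2(H))$ is a subgroup of the cyclic group $\mathrm{Gal}(\mathbb{F}_q/\mathbb{F}_p)$, so it is cyclic. You also make explicit two points the paper leaves implicit, namely that $\pi_2(H)\leq G$ because $\pi_2$ is a homomorphism, and that $\sigma$ is a homomorphism.
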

\begin{proof}
    Note that $\sigma : G \longrightarrow Gal(\mathbb{F}_q,\mathbb{F}_p) \simeq C_m$, where $C_m$ is the cyclic group of $m$ elements. Therefore, $\sigma (H) \leq C_m$ is cyclic and generated by some element $\gamma \in Gal(\mathbb{F}_q,\mathbb{F}_p)$. 
\end{proof}

\begin{proposition}
    Let $\hat{H}=Q_{\hat{H}} Jin_{\psi}(\hat{H})\leq \hat{G}$ be a subgroup, $\pi_2(\hat{H}) =H$, and  $\sigma(H) = \langle \gamma_H\rangle$. Then the normalizer of $\hat{H}$ on $\hat{G}$, $N_{\hat{G}}(\hat{H})$, is given by $N_{\hat{G}}(\hat{H})=Q_N \cdot Jin_{\psi^*}(\hat{T})$, with $Q_N\leq \mathbb{F}_q^*, T=\pi_2(\hat{T}) \leq G$ characterided by
    \begin{equation}
        Q_N = \{x \in  \mathbb{F}_q^*,x^{\gamma_H-1} \leq Q_{\hat{H}} \}
    \end{equation}
    And $T$ is the maximal set of $\hat{G}$ such that there exist an $x$ satifiying 
    \begin{equation}
        x^{1-d}  \in Q_{H} \psi(d)^{1-b} \frac{\alpha(d,b)}{\alpha(b,d)}\quad \forall b \in H
    \end{equation}
    In which case, we can take $\psi^*(b)=x$ 

    or equivalently that the system of equations on $\delta(x)$
    \begin{equation}
        \delta(x)(1-\mu(d) = \Psi(d)(1-\mu(b)) + \mu(\alpha(d,b)) - \mu(\alpha(b,d)) \mod |Q_{\hat{H}}| \quad \forall b \in H
    \end{equation}
\end{proposition}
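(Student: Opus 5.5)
The plan is to compute the conjugation action of a general element $(x,d)\in\hat{G}$ on a general element of $\hat{H}$ and require the result to stay in $\hat{H}$. By the proposition $H=Q_{\hat{H}}\cdot Jin(\hat{H})$, every element of $\hat{H}$ has the form $(c\,\psi(b),b)$ with $c\in Q_{\hat{H}}$ and $b\in H$. Since $G$ is abelian, $\pi_2$ of the conjugate is again $b$. Hence $(x,d)(c\psi(b),b)(x,d)^{-1}\in\hat{H}$ holds exactly when its first coordinate lies in $Q_{\hat{H}}\psi(b)$. Thus $N_{\hat{G}}(\hat{H})$ is cut out by a congruence modulo $Q_{\hat{H}}$ in the first coordinate, one for each $b\in H$.

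First I will treat the two generators separately: the elements $(c,1)\in\mathbb{F}_q^*\times\{1\}$, and the elements $(x,d)$ with $d\neq 1$. For $(c,1)$, conjugating $(\psi(b),b)$ gives first coordinate $c\,\psi(b)\,(c^{-1})^{b}$, using normalisation of $\alpha$. So $(c,1)$ normalises $\hat{H}$ iff $c^{1-\sigma(b)}\in Q_{\hat{H}}$ for all $b\in H$. Because $\sigma(H)=\langle\gamma_H\rangle$ by the preceding lemma, and $Q_{\hat{H}}$ is stable under Galois automorphisms (it is the unique subgroup of its order in the cyclic group $\mathbb{F}_q^*$), it suffices to impose the condition for the generator $\gamma_H$. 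This yields $Q_N=\{x: x^{\gamma_H-1}\in Q_{\hat{H}}\}$.

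For general $(x,d)$, I will compare $(x,d)(\psi(b),b)$ with $(\psi(b)c',b)(x,d)$ for some $c'\in Q_{\hat{H}}$, which is equivalent and avoids computing inverses. The first coordinates are $x\,\psi(b)^d\,\alpha(d,b)$ and $c'\psi(b)\,x^b\alpha(b,d)$. Their equality modulo $Q_{\hat{H}}$ rearranges to $x^{1-b}\in Q_{\hat{H}}\,\psi(b)^{1-d}\,\alpha(b,d)/\alpha(d,b)$, written multiplicatively with Galois exponents. Note that the roles of $b,d$ in the statement's displayed formula need to be matched to this; I will track the symmetric form carefully. The set $T$ of those $d\in G$ for which some $x$ solves this for all $b\in H$ is then $\pi_2(N_{\hat{G}}(\hat{H}))$, and any such solution serves as $\psi^*(d)$. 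The normaliser is a group containing $\mathbb{F}_q^*\cap N=Q_N$, so the earlier decomposition proposition gives $N_{\hat{G}}(\hat{H})=Q_N\cdot Jin_{\psi^*}(\hat{T})$.

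Finally, the additive form follows by applying $\mu$. I will write $x=\lambda^{\delta(x)}$ and $\psi(d)=\lambda^{\Psi(d)}$, so that a Galois exponent $\sigma(b)=p^{j}$ becomes multiplication by $\mu(b)$. The condition ``lies in $Q_{\hat{H}}$'' becomes congruence modulo the index $(q-1)/|Q_{\hat{H}}|$; I expect the paper's ``$\bmod |Q_{\hat{H}}|$'' to mean this. The main obstacle is this bookkeeping: keeping consistent which element acts as exponent, and confirming that the quantifier ``for all $b\in H$'' can be reduced to a generating set, as in Lemma~\ref{JinetaExtender}. That reduction is not needed for correctness, only for computability, so I will state the result with all $b\in H$.
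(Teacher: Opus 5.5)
Your proposal is correct and follows essentially the paper's route: conjugate elements of $\hat{H}$ by $(x,d)$, specialise to $d=1$ to obtain $Q_N$ (reducing to the generator $\gamma_H$ via Galois-stability of $Q_{\hat{H}}$), read off $T=\pi_2(N_{\hat{G}}(\hat{H}))$ and $\psi^*$ from the general case, and finish with the decomposition $N=Q_N\cdot Jin(N)$. Comparing $(x,d)h$ with $h'(x,d)$ instead of computing $(x,d)h(x,d)^{-1}$ spares you the cocycle identity the paper needs to turn $\alpha(bd,b^{-1})/\alpha(b,b^{-1})^{d}$ into $1/\alpha(d,b)$. The one thing to add is that conjugation by $(x,d)$ sends $(c,1)\in Q_{\hat{H}}\times\{1\}$ to $(c^{d},1)\in\hat{H}$ by that same Galois-stability, so testing only on the elements $(\psi(b),b)$ suffices. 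Your reading of the swapped roles of $b,d$ and of the modulus as $(q-1)/|Q_{\hat{H}}|$ is right.
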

\begin{proof}
    Let $(a,b)\in N, (c,d) \in \hat{H}$. Then
    \begin{align*}
        (a,b)(c,d)(a,b)^{-1} & = (a,b)(c,d) \left( \frac{1}{a^{b^{-1}}\alpha(b,b^{-1})^{b^{-1}}} ,b^{-1}\right) \\ & = \left( \frac{a}{a^{bdb^{-1}}} c^{b}\frac{\alpha(b,d)\alpha(bd,b^{-1})}{\alpha(b,b^{-1})^{bdb^{-1}}}, bdb^{-1}\right) \\ & = \left( \frac{a}{a^{d}} c^{b}\frac{\alpha(b,d)\alpha(db,b^{-1})}{\alpha(b,b^{-1})^{d}}, d\right) \\ & = \left( \frac{a}{a^{d}} c^{b}\frac{\alpha(b,d)}{\alpha(d,b)}, d\right) 
    \end{align*}
    As $(a,b)$ is in the normaliser, this should be in $\hat{H}$, which is equivalent to
    \begin{equation}
        \frac{a}{a^{d}} \frac{c^{b}}{c}\frac{\alpha(b,d)}{\alpha(d,b)} \in Q_{H}
    \end{equation}
    Now, suppose that $b=1$. This is reduced to
    \begin{equation}
        \frac{a}{a^{d}} \in Q_{H}
    \end{equation}
   Since this holds for every $d \in H$, it is equivalent to $a a^{-\gamma_H} \in Q_{\hat{H}}$. In a group setting, this condition simplifies to $a^{\gamma_H - 1} \in Q_{\hat{H}}$, from which we conclude that $Q_N^{\gamma_H - 1} \subseteq Q_{\hat{H}}$. To prove that it is maximal, let $(x,1) \in \hat{G}$ such that $x^{\gamma_H-1} \in Q_{\hat{H}}$. Then $(x,1)(c,d)(\frac{1}{x},1) =  \left(\frac{x}{x^d}c,d \right)$. As $x^{\gamma_H-1} \in Q_{\hat{H}}$, in particular $x^{d-1} \in Q_{\hat{H}}$ and $\frac{x}{x^d}\in Q_{\hat{H}}$, so $\left(\frac{x}{x^d}c,d \right) = \left(\frac{x}{x^d},1) (c,d \right)\in H$ and $(x,1) \in N$.

    For $T$, we can take $a=\psi^*(b)$, and we get that
    \begin{equation}
        \psi^{*}(b)^{1-d} \in Q_{\hat{H}} c^{1-b} \frac{\alpha(d,b)}{\alpha(b,d)}
    \end{equation}
    As $c\in \psi(d) Q_N$ and $Q_N^{1-b} \subseteq Q_{\hat{H}}$, this can be reduced to
    \begin{equation}
        \psi^{*}(b)^{1-d} \in Q_{\hat{H}} \psi(d)^{1-b} \frac{\alpha(d,b)}{\alpha(b,d)}
    \end{equation}
    And therefore for all $b\in T$ there exist such $x$, given by $x=\psi^*(b)$. Finally, if for some $b$ there exist $x$ such that $ x^{1-d}  \in Q_{H} \psi(d)^{1-b} \frac{\alpha(d,b)}{\alpha(b,d)}\quad \forall b \in H$, then clearly $(x,b) \in N$ and $b\in T$ as we wanted to prove.
\end{proof}

Let $t\in G$ and $k\in \mathbb{N}$. We will denote by $N_k(t) = 1 + \sigma(t) + \sigma^2(t) + \cdots + \sigma^{k-1}(t)$ and $\alpha_k(t) = \prod_{i=1}^{k-1}\alpha(t,t^i)$. Now, to compute $E=E_{\hat{G}}(K/H)$ under the assumption that $K/H$ is cyclic, we have that
\begin{proposition}
    Let $\hat{H}, \hat{K} \leq \hat{G}$ such that $(\hat{K}, \hat{H})$ is a shoda pair. Then $E=Q_E Jin_{\psi^*}(E)$ is given by

    \begin{equation}
        Q_{E}=\left\{x \in \mathbb{F}_q^*; x^{1-t}  \in Q_{\hat{H}} \frac{c^{N_{3^k}(t)} \alpha_k(t)}{c}  \right\}
    \end{equation}
    and $\pi_2(E)$ is the set of elements such that the following system has a solution
    \begin{equation}
        x^{1-t}\in Q_{\hat{H}} \frac{c^{N_{3^k}(t)} \alpha_k(t)}{c^b} \frac{\alpha(t,b)}{\alpha(b,t)}
    \end{equation}
    for some $k\in \{1,\cdots, [K:H] \}$ such that $[K:H] | 3^k-1$. In that case, we can take $\psi^*(b)=x$.
\end{proposition}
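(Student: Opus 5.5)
The plan is to follow the template of the preceding proposition on $N_{\hat{G}}(\hat{H})$, but now requiring that conjugation preserves a cyclotomic class of generators rather than $\hat{H}$ itself. By definition, $E=E_{\hat{G}}(\hat{K}/\hat{H})$ is the stabiliser, inside $N=N_{\hat{G}}(\hat{H})\cap N_{\hat{G}}(\hat{K})$, of a $q$-cyclotomic class $C$ of $\hat{K}/\hat{H}$ containing a generator. Since $E$ is a subgroup of $\hat{G}$, the earlier decomposition proposition gives $E=Q_E\cdot Jin_{\psi^*}(E)$. It therefore suffices to identify the Quag $Q_E$ and the projection $\pi_2(E)$, and to say how to choose $\psi^*$.

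First I fix a generator of $\hat{K}/\hat{H}$. By the corollary on cyclic quotients, applied to $\hat{K}$ in place of $\hat{G}$, it can be taken of the form $g=(c,t)\hat{H}$, with $t\hat{H}$ generating $\pi_2(\hat{K})/\pi_2(\hat{H})$. The class of $g$ consists of the powers $g^{p^k}$; here the paper writes the prime as $3$, and I will read $3^k$ as $p^k$ with $[K:H]\mid p^k-1$.

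Next I compute the power $(c,t)^{m}$ by iterating the multiplication rule of $\hat{G}$. This should give
\[
(c,t)^m=\bigl(c^{N_m(t)}\alpha_m(t),\,t^m\bigr),
\]
where $c^{N_m(t)}$ denotes $\prod_{i<m}c^{t^i}$; this is the same computation already used in Lemma \ref{JinetaExtender}.

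Then I conjugate $(c,t)$ by an arbitrary $(x,b)\in\hat{G}$, reusing the displayed computation from the normaliser proof with $(c,d)$ replaced by $(c,t)$. The result is
\[
(x,b)(c,t)(x,b)^{-1}=\Bigl(x^{1-t}c^{b}\tfrac{\alpha(b,t)}{\alpha(t,b)},\,t\Bigr).
\]
The element $(x,b)$ lies in $E$ exactly when this conjugate is congruent modulo $\hat{H}$ to $(c,t)^{p^k}$ for some admissible $k$. Because the second coordinates must agree modulo $\pi_2(\hat{H})$, the condition reduces, as in the normaliser proposition, to a membership statement in the first coordinate modulo $Q_{\hat{H}}$. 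Rearranging gives the displayed condition $x^{1-t}\in Q_{\hat{H}}\,\frac{c^{N_{p^k}(t)}\alpha_k(t)}{c^{b}}\frac{\alpha(t,b)}{\alpha(b,t)}$. Specialising to $b=1$ yields the description of $Q_E$, exactly as $Q_N$ was obtained before.

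For $\pi_2(E)$, I argue in both directions. If $(\psi^*(b),b)\in E$, then $x=\psi^*(b)$ solves the system. Conversely, any solution $x$ gives $(x,b)\in E$, so $b\in\pi_2(E)$ and I may set $\psi^*(b)=x$. By Lemma \ref{JinetaExtender}, such a choice is unique up to $Q_E$, and a consistent choice defines a Jineta.

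The main obstacle is the bookkeeping in identifying $(c,t)^{p^k}$ modulo $\hat{H}$. The power $t^{p^k}$ differs from $t$ by an element $h\in\pi_2(\hat{H})$, so one must divide by $(\psi(h),h)$. This introduces extra $\psi$ and $\alpha$ factors, which have to be absorbed into $Q_{\hat{H}}$ or into the term $\alpha_k(t)$ shown in the statement. I would handle this using the cocycle identity together with the commutativity of $G$, in the same way as in the well-definedness computation for $\psi^*$.

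I also need to check that membership in $E$ already forces $(x,b)\in N_{\hat{G}}(\hat{H})\cap N_{\hat{G}}(\hat{K})$. Otherwise those normaliser conditions from the previous proposition must be imposed alongside the displayed system.
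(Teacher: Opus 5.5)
Your outline matches the paper's proof. You fix a generator $(c,t)\hat{H}$ of $\hat{K}/\hat{H}$, compute $(c,t)^{p^k}$, conjugate $(c,t)$ by $(x,b)$, compare first coordinates modulo $Q_{\hat{H}}$, and set $b=1$ to get $Q_E$.

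\textbf{The gap.} The gap is the step you call the main obstacle, and the absorption you propose does not work. Suppose $t^{p^k}=th$ with $1\neq h\in\pi_2(\hat{H})$. Writing $(U,th)=(U',t)(\psi(h),h)$ gives $U'=U/(\psi(h)^t\alpha(t,h))$, so $(c,t)^{p^k}\hat{H}=(U',t)\hat{H}$. Since $(u,t)(v,t)^{-1}=(u/v,1)$, the correct condition is the displayed one with an extra factor $(\psi(h)^t\alpha(t,h))^{-1}$ on the right-hand side. This factor depends on $\psi$, so it cannot be part of any cocycle-only term $\alpha_\bullet(t)$, and in general it is not in $Q_{\hat{H}}$.

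A concrete example: take $p=q=3$, $\sigma$ and $\alpha$ trivial, $G=\langle t\rangle\cong C_4$, $\hat{K}=\hat{G}$, $\hat{H}=\langle(-1,t^2)\rangle$, generator $g=(1,t)\hat{H}$, and $k=1$. The factor is $-1\notin Q_{\hat{H}}=\{1\}$. The displayed condition holds at $k=1$ for $b=1$, yet conjugation is trivial and $g^3\neq g$, because $(1,t^2)\notin\hat{H}$.

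\textbf{How the paper avoids it.} The paper chooses $t$ of order exactly $[\pi_2(\hat{K}):\pi_2(\hat{H})]$. This uses its lemma on cyclic quotients of abelian groups, together with the lemma that a generator of $\hat{K}/\hat{H}$ may be taken of the form $(c,t^j)$; here $t^j$ has the same order, since $j$ is prime to it. Then $[\pi_2(\hat{K}):\pi_2(\hat{H})]\mid p^k-1$ forces $t^{p^k}=t$ exactly. So $h=1$, and the comparison is literally $u/v\in Q_{\hat{H}}$.

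That lemma is false as stated. For $C_4\supset C_2$ the only element of order $2$ lies in $C_2$, which is exactly the example above. When no such $t$ exists, the correction factor must be carried in the condition rather than absorbed.

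\textbf{Smaller points.} The cocycle factor of $(c,t)^{p^k}$ is indexed by $p^k$, not by $k$. Iterating $(u,t^i)(c,t)=(u\,c^{t^i}\alpha(t^i,t),t^{i+1})$ produces $\prod_{i=1}^{p^k-1}\alpha(t^i,t)$. This agrees with the paper's $\prod_i\alpha(t,t^i)$ when, for instance, $\sigma(t)$ fixes the values of $\alpha$, but not in general.

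Your final concern is justified, and the paper's proof passes over it. Since $\hat{G}'\le\mathbb{F}_q^*\times\{1\}\le\hat{K}$, the subgroup $\hat{K}$ is automatically normal in $\hat{G}$. However, the displayed condition involves only $t$, so it cannot force $(x,b)\in N_{\hat{G}}(\hat{H})$. The paper argues only the forward direction, for $(a,b)\in E\subseteq N$. For the converse, that a solution $x$ yields $(x,b)\in E$, the normaliser condition from the preceding proposition must be added to the system.
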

\begin{proof}
    Let $(c, t) \hat{H} \in \hat{K}/ \hat{H}$ be a generator, with $tH$ a generator of $K/H$ with $t\in K$ of order $|K/H| = [K:H]$. Then, we have that
    \begin{equation}
        ((c, t) \hat{H}) ^{p^k} = (c^{N_{3^k}(t)} \alpha_k(t),t^{3^k} ) \hat{H}
    \end{equation}
    Therefore, if $(a,b) \in E$ we have that
    \begin{equation}
        (c^{N_{3^k}(t)} \alpha_k(t),t^{3^k} ) \hat{H} = (a,b)(c, t)(a,b)^{-1} \hat{H} = \left( \frac{a}{a^t}c^b \frac{\alpha(b,t)}{\alpha(t,b)} ,t\right)\hat{H} 
    \end{equation}
    As a result $t^{3^k} = t \mod H$, so we have that $[K:H] | 3^k-1$. Consequently, we have that
    \begin{equation}
        \frac{a}{a^t} \frac{c^b}{c^{N_{3^k}(t)} \alpha_k(t)} \frac{\alpha(b,t)}{\alpha(t,b)} \in Q_{\hat{H}}
    \end{equation}
    Finally, $Q_{\hat{H}}$ as well as in the previous proposition can be proved that it is determined by
    \begin{equation}
        Q_{E}=\left\{x \in \mathbb{F}_q^*; x^{1-t}  \in Q_{\hat{H}} \frac{c^{N_{3^k}(t)} \alpha_k(t)}{c}  \right\}
    \end{equation}
    and $\pi_2(E)$ can be characterised as the set of elements $b\in G$ such that the following system has a solution
    \begin{equation}
        x^{1-t}\in Q_{\hat{H}} \frac{c^{N_{3^k}(t)} \alpha_k(t)}{c^b} \frac{\alpha(t,b)}{\alpha(b,t)}
    \end{equation}
    for some $k\in \{1,\cdots, [K:H] \}$ such that $[K:H] | 3^k-1$
\end{proof}

\section{Shoda Pairs}
In this section, we will compute the idempotents for $\mathbb{F}_p \hat{G}$ that lead to the Wedderburn decomposition. To do so, we compute $\hat{H}'$ for all subgroups $\hat{H}$ of $\hat{G}$, as well as a maximal abelian subgroup of $\hat{G}$.

In order to apply Theorem \ref{metabelian}, we need to obtain a maximal abelian subgroup of $\hat{G}$.
\begin{definition}
    Let $Z_\alpha^\sigma(G)$ be the maximal subgroup of $\ker(\sigma) \cap Z(G)$ such that $\alpha|_{Z_\alpha^\sigma(G)}$ is a symmetric $2$-cocycle.
\end{definition}

\begin{proposition}
    The set $A=\mathbb{F}_q^* \times Z_\alpha^\sigma(G)$ is a maximal abelian subgroup of $\hat{G}$.
\end{proposition}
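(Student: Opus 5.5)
The plan is to prove two things in turn: that $A=\mathbb{F}_q^*\times Z_\alpha^\sigma(G)$ is an abelian subgroup of $\hat{G}$, and that it is maximal among abelian subgroups. Throughout, the main tool is the multiplication rule of $\hat{G}$ and the commutator formula for $(a,b)(c,d)$ versus $(c,d)(a,b)$ in $\hat{G}$.

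\emph{Subgroup.} Write $Z=Z_\alpha^\sigma(G)$. The set $A$ is $\pi_2^{-1}(Z)$, the preimage of a subgroup of $G$ under the epimorphism $\pi_2$ from the corollary on metabelianity. Hence $A$ is a subgroup.

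\emph{Abelian.} For $(a,b),(c,d)\in\hat{G}$ we have
\begin{equation*}
(a,b)(c,d)=(a\,c^{b}\alpha(b,d),bd),\qquad (c,d)(a,b)=(c\,a^{d}\alpha(d,b),db).
\end{equation*}
If $b,d\in Z\subseteq \ker(\sigma)\cap Z(G)$, then $c^b=c$, $a^d=a$ and $bd=db$. Since $\alpha|_{Z}$ is symmetric, $\alpha(b,d)=\alpha(d,b)$, so the two products coincide.

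\emph{Maximality.} Let $B\supseteq A$ be abelian and take $(c,d)\in B$. I will show $d\in Z$ in three steps, each obtained by commuting $(c,d)$ with a suitable element of $A$.
\begin{enumerate}
\item Commuting with $(a,1)$ for every $a\in\mathbb{F}_q^*$ gives $a^d=a$ for all $a$, using the normalisation of $\alpha$. Hence $d\in\ker\sigma$.
\item Commuting with $(1,z)$ for $z\in Z$ gives, once $\sigma(d)$ and $\sigma(z)$ are trivial, $dz=zd$ and $\alpha(z,d)=\alpha(d,z)$. Here $dz=zd$ is automatic, because the paper's $G$ is abelian. Hence $d\in Z(G)$.
\item Step 2 shows $\alpha(d,z)=\alpha(z,d)$ for all $z\in Z$. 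Moreover, $B$ abelian and containing $(c,d)$ gives the same symmetry for every element of $\pi_2(B)$, and indeed for all pairs in $\pi_2(B)$, since $B\supseteq\mathbb{F}_q^*$ lets the first coordinates vary freely. So $\pi_2(B)$ is a subgroup of $\ker\sigma\cap Z(G)$ on which $\alpha$ is symmetric.
\end{enumerate}

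By the maximality in the definition of $Z_\alpha^\sigma(G)$, we get $\pi_2(B)\subseteq Z$. Therefore $B\subseteq \pi_2^{-1}(Z)=A$, and so $B=A$.

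The main obstacle is the step ``maximal subgroup on which $\alpha$ is symmetric''. Symmetry of $\alpha$ on two subgroups does not obviously pass to the subgroup they generate, so $Z$ need not be unique and need not contain every symmetric subgroup. The argument above sidesteps this, because it only needs $\pi_2(B)\supseteq Z$ together with maximality of $Z$ to force equality, not a union argument. I would state explicitly that maximality here is with respect to inclusion, so that $\pi_2(B)$ being a symmetric subgroup containing $Z$ yields $\pi_2(B)=Z$.
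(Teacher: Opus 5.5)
Your proof is correct and follows essentially the same route as the paper. Commuting with $(a,1)$ forces $\pi_2(B)\subseteq\ker\sigma$, commuting elements of $B$ gives symmetry of $\alpha$ on $\pi_2(B)\supseteq Z_\alpha^\sigma(G)$, and maximality of $Z_\alpha^\sigma(G)$ finishes; your use of $A=\pi_2^{-1}(Z_\alpha^\sigma(G))$ for the subgroup property and your reading of ``maximal'' as inclusion-maximal (since $Z_\alpha^\sigma(G)$ need not be unique) make precise two points the paper leaves implicit.
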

\begin{proof}
    It is a subgroup of $\hat{G}$ as $Z_\alpha^\sigma(G)$ is a subgroup of $\hat{G}$ and $Z_\alpha^\sigma(G) = \pi_2(A)$ To see that it is abelian, let $(a,b),(c,d)\in  A$. Then
    \begin{align*}
        (a,b)(c,d) = & (ac^b \alpha(b,d),bd) \\ = & (ac \alpha(b,d),bd) \\ = & (ca\alpha(d,b),db) \\ = & (ca^d \alpha(d,b), db) \\ = & (c,d) (a,b)
    \end{align*}

    Now we will prove that it is maximal. Suppose there exists an abelian subgroup $B\leq \hat{G}$ such that $A\leq B$. Then $\mathbb{F}_q^* \times \{1\}\leq A \leq B$, so $B=\mathbb{F}_q^* \times \pi_2(B)$.
    
      Take $a\in \mathbb{F}_q^*, b \in \pi_2(B)$, so $(1,b),(a,1) \in B$, and as it has to be commutative
    \begin{equation}
        (a,b) = (a,1)(1,b) = (1,b)(a,1) =  (a^b,b)
    \end{equation}
    so $a=a^b$ for all $b\in \pi_2(B)$ and therefore $B\leq \ker(\sigma)$. Now, for the $2$-cocycle, take $a,b\in \pi_2(B)$, then $(1,a),(1,b) \in B$, and as it has to be commutative:
    \begin{equation}
        ( \alpha(a,b), ab) = (1,a)(1,b) = (1,b)(1,a) = (\alpha(b,a),ba)) 
    \end{equation}
    so $\alpha|_B$ is symmetric. But as $A$ is maximal under this conditions, we have that $B=A$. 
\end{proof}

Now we will compute $\hat{H}'$ for all subgroups $\hat{H}$ of $\hat{G}$

\begin{proposition} \label{H'}
    Let $\hat{H}=Q_{\hat{H}} \cdot Jin_\psi(\hat{H}) \leq \hat{G}$ be a subgroup and $\sigma(\pi_2(\hat{H})) = \left \langle \gamma \right \rangle$. Then 
    \begin{equation}
        \hat{H}' = \left\langle \frac{a}{\gamma(a)}, \frac{\psi(b)}{\psi(b)^d}\frac{\psi(d)^b}{\psi(d)} \frac{\alpha(b,d) }{\alpha(d,b)}; a\in  Q_{\hat{H}}, b,d \in \pi_2(\hat{H}) \right\rangle \times {1}
    \end{equation}
\end{proposition}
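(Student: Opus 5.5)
We compute commutators of generators directly and then show that the subgroup they generate is normal, so that it is all of $\hat{H}'$. First, since $\pi_2$ is a homomorphism onto the abelian group $G$, every commutator in $\hat{H}$ lies in $\ker(\pi_2)\cap \hat{H} = Q_{\hat{H}}\times\{1\}$. Hence $\hat{H}'\leq Q_{\hat{H}}\times\{1\}$ is a subgroup of the cyclic group $\mathbb{F}_q^*\times\{1\}$; in particular it is abelian, and it is determined by the commutators of any generating set together with their conjugates. By the decomposition $\hat{H}=Q_{\hat{H}}\cdot Jin_\psi(\hat{H})$ proved above, $\hat{H}$ is generated by the elements $(a,1)$ with $a\in Q_{\hat{H}}$ and $(\psi(b),b)$ with $b\in\pi_2(\hat{H})$.

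Second, we compute the three types of commutators using the conjugation formula from the proof of the normalizer proposition. That formula gives
\[
(a,b)(c,d)(a,b)^{-1}=\left(\tfrac{a}{a^{d}}\,c^{b}\,\tfrac{\alpha(b,d)}{\alpha(d,b)},\,d\right).
\]
\begin{enumerate}
\item For two elements of $Q_{\hat{H}}\times\{1\}$ the commutator is trivial.
\item For $(a,1)$ and $(\psi(d),d)$, conjugating $(\psi(d),d)$ by $(a,1)$ and multiplying by $(\psi(d),d)^{-1}$ gives $(a/a^{d},1)$.
\item For $(\psi(b),b)$ and $(\psi(d),d)$ the commutator is
\[
\left(\tfrac{\psi(b)}{\psi(b)^{d}}\,\tfrac{\psi(d)^{b}}{\psi(d)}\,\tfrac{\alpha(b,d)}{\alpha(d,b)},\,1\right),
\]
using the normalized cocycle identity $\alpha(db,b^{-1})=\alpha(b,b^{-1})^{d}$. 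This identity was already used in that proof, and $G$ is abelian.
\end{enumerate}

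Third, we reduce the elements $a/a^{d}$, for $d\in\pi_2(\hat{H})$, to the single family $a/\gamma(a)$. Since $\sigma(\pi_2(\hat{H}))=\langle\gamma\rangle$, every $\sigma(d)$ equals $\gamma^j$ for some $j$. Then
\[
\tfrac{a}{\gamma^j(a)}=\prod_{i=0}^{j-1}\tfrac{\gamma^i(a)}{\gamma^{i+1}(a)}.
\]
Each factor $\gamma^i(a)/\gamma(\gamma^i(a))$ has the form $x/\gamma(x)$ with $x=\gamma^i(a)$. We need $x\in Q_{\hat{H}}$. This holds because $Q_{\hat{H}}$ is the unique subgroup of its order in the cyclic group $\mathbb{F}_q^*$, so any field automorphism preserves it. Conversely, choosing $d$ with $\sigma(d)=\gamma$ shows that $a/\gamma(a)$ is itself a commutator. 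So the two generating sets generate the same subgroup.

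Finally, we must check that the subgroup $S$ generated by these elements already equals $\hat{H}'$, that is, that no further closure is needed. Here $S$ is generated by commutators of generators, so $S\le\hat{H}'$, and $S$ sits inside $\mathbb{F}_q^*\times\{1\}$. For the reverse inclusion we use the standard fact that $\hat{H}'$ is the normal closure of the commutators of generators, and show $S$ is normal in $\hat{H}$. Conjugation by $(c,d)$ acts on $\mathbb{F}_q^*\times\{1\}$ as $x\mapsto x^{d}$. Every subgroup of the cyclic group $\mathbb{F}_q^*$ is characteristic, so $S$ is normal and hence $\hat{H}'=S$.

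The main obstacle is the cocycle bookkeeping in the commutator of two Jineta elements. One has to carefully simplify the inverse $(\psi(d),d)^{-1}$ and the terms $\alpha(b,b^{-1})$, relying on $G$ abelian and on $\alpha$ normalized. I would reuse the computation from the normalizer proposition rather than redo it.
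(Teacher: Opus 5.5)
Your proof is correct, but it proves $\hat H'\subseteq S$ differently from the paper.

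\textbf{The paper's route.} The paper computes $[(a,b),(c,d)]$ for two \emph{arbitrary} elements of $\hat H$ and obtains $\bigl(\frac{a}{a^d}\cdot\frac{c^b}{c}\cdot\frac{\alpha(b,d)}{\alpha(d,b)},1\bigr)$. It then writes $a=(a/\psi(b))\,\psi(b)$ and $c=(c/\psi(d))\,\psi(d)$, with $a/\psi(b),\,c/\psi(d)\in Q_{\hat H}$. This splits every commutator explicitly into a product of the proposed generators, after the same telescoping reduction of $x/x^d$ to $x/\gamma(x)$ that you use. Since $\hat H'$ is by definition generated by all commutators, this gives $\hat H'\subseteq S$ with no normality argument.

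\textbf{Your route.} You compute commutators only on the generating set $(Q_{\hat H}\times\{1\})\cup Jin_\psi(\hat H)$ and pass to $\hat H'$ via the normal-closure fact. Normality of $S$ comes from two observations: conjugation by $(c,d)$ acts on $\mathbb{F}_q^*\times\{1\}$ as $x\mapsto x^d$, and every subgroup of the cyclic group $\mathbb{F}_q^*$ is characteristic.

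\textbf{What each buys.} The paper's route is one explicit computation. Yours is more structural and shows in fact $S\trianglelefteq\hat G$. It also works verbatim for any generating set of $\hat H$. So it shows that $b,d$ may be restricted to a generating set of $\pi_2(\hat H)$, a useful refinement for computation. Your remark that $Q_{\hat H}$ is $\gamma$-stable also makes precise a point the paper glosses, since it writes $c\in\mathbb{F}_q$ in the telescoping step.

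\textbf{One slip.} The identity you cite, $\alpha(db,b^{-1})=\alpha(b,b^{-1})^{d}$, is not what the cocycle condition gives. Used as stated, it would turn $\alpha(b,d)\alpha(db,b^{-1})/\alpha(b,b^{-1})^{d}$ into $\alpha(b,d)$ rather than $\alpha(b,d)/\alpha(d,b)$. The correct identity comes from the cocycle condition at $(d,b,b^{-1})$ together with normalization: $\alpha(d,b)\,\alpha(db,b^{-1})=\alpha(b,b^{-1})^{d}$. The conjugation formula you actually use is correct, so nothing downstream changes.
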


\begin{proof}
We will study $\hat{H}'$.

For all $(a,b),(c,d) \in \hat{H}$
    \begin{align*}
        [(a,b), (c,d)] & = (a,b) (c,d) (a,b)^{-1} (c,d)^{-1} \\
         & = (a,b) (c,d) \left(\frac{1}{a\alpha(b,b^{-1})}, b^{-1} \right)\left(\frac{1}{c\alpha(d,d^{-1})}, d^{-1} \right) \\
         & =\left(ac^b \alpha(b,d), bd\right)\left(\frac{1}{(a\alpha(b,b^{-1}))^{b^{-1}}}, b^{-1} \right)\left(\frac{1}{(c\alpha(d,d^{-1}))^{d^{-1}}}, d^{-1} \right) \\
         & = \left( \frac{ac^b \alpha(b,d) \alpha(bd,b^{-1})}{a^{d}\alpha(b,b^{-1})^{d}},bdb^{-1} \right) \left(\frac{1}{(c\alpha(d,d^{-1}))^{d^{-1}}}, d^{-1} \right)  \\
         & = \left( \frac{ac^b \alpha(b,d)\alpha(bd,b^{-1}) \alpha(bdb^{-1}),d^{-1})}{a^{d}\alpha(b,b^{-1})^{d} c \alpha(d,d^{-1})} ,bdb^{-1} d^{-1}\right) \\
         & = \left( \frac{a}{a^d} \cdot \frac{c^b}{c} \cdot \frac{\alpha(b,d)\alpha(bd,b^{-1}) }{\alpha(b,b^{-1})^{d}} ,1\right) \\
          & = \left( \frac{a}{a^d} \cdot \frac{c^b}{c} \cdot \frac{\alpha(b,d) }{\alpha(d,b)} ,1\right) 
    \end{align*} 

 If we take $b=1$ we get that $\left(\frac{a}{a^d},1\right)\in \hat{H}'$ for all $a\in Q_{\hat{H}}, d \in \pi_2(\hat{H})$. In particular, we have $\left(\frac{a}{\gamma(a)},1\right)\in \hat{H}'$. For the second element, it is enough to take $(\psi(b),b),( \psi(d),d) \in \hat{H}$. 

To prove that this generates the whole group, first note that  $\sigma(d) = \gamma^j$ for some $j\in \mathbb{N}$. In addition, $\gamma(a)=a^{p^h}$ for some $h\in \mathbb{N}$. Therefore, we have that

\begin{equation}
\frac{a}{\gamma(a)} \cdot \left(\frac{a}{\gamma(a)} \right)^{p^h} \cdot \left(\frac{a}{\gamma(a)} \right)^{p^{2h}} \cdots \left(\frac{a}{\gamma(a)} \right)^{p^{jh}} = \frac{a}{\gamma(a)} \frac{\gamma(a)}{\gamma^2(a)} \cdots \frac{\gamma^{j-1}(a)}{\gamma^j(a)} = \frac{a}{\gamma^j(a)} = \frac{a}{a^d}
\end{equation}

So $\left(\frac{a}{a^d},1 \right) \in \left \langle \left(\frac{c}{\gamma(c)}, 1 \right) ; c\in \mathbb{F}_q \right \rangle$. Now, we have that 

\begin{align*}
        [(a,b), (c,d)] 
         & = \left( \frac{a}{a^d} \cdot \frac{c^b}{c} \cdot \frac{\alpha(b,d) }{\alpha(d,b)},1\right)  \\
         & = \left( \frac{a/\psi(b)}{(a/\psi(b))^d} \cdot \frac{(c/\psi(d))^b}{c/\psi(d)} \cdot \frac{\psi(b)}{\psi(b)^d} \frac{\psi(d)^b}{\psi(d)}\frac{\alpha(b,d) }{\alpha(d,b)},1\right)  \\
         & = \left( \frac{a/\psi(b)}{(a/\psi(b))^d} , 1\right)\left( \frac{(c/\psi(d))^b}{c/\psi(d)}  , 1\right) \left(\frac{\psi(b)}{\psi(b)^d} \frac{\psi(d)^b}{\psi(d)}\frac{\alpha(b,d) }{\alpha(d,b)},1 \right)
         \\
         & = \left( \frac{a/\psi(b)}{(a/\psi(b))^d} , 1\right)\left( \frac{(c/\psi(d))}{c/\psi(d)^b}  , 1\right)^{-1} \left(\frac{\psi(b)}{\psi(b)^d} \frac{\psi(d)^b}{\psi(d)}\frac{\alpha(b,d) }{\alpha(d,b)},1 \right)
    \end{align*} 
\end{proof}

As a result, we have that 
\begin{corollary} 
    Let $K=\mathbb{F}_p^* \cdot Jin(\hat{K}) \leq \hat{G}$, then 
     \begin{equation}
        K' = \left\langle \frac{a}{\gamma_K(a)}, \frac{\alpha(b,d) }{\alpha(d,b)}; a\in  \mathbb{F}_q^*, b,d \in \pi_2(K) \right\rangle \times {1}
    \end{equation}
Where $\sigma(K) = \langle \gamma_K\rangle$
\end{corollary}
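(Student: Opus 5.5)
This is a specialization of Proposition \ref{H'} to the case $Q_{K}=\mathbb{F}_q^*$. The plan is to show that, once the Quag is the whole multiplicative group, every term involving $\psi$ in that proposition is already absorbed by the Galois-type generators $a/\gamma_K(a)$. (The hypothesis is written $\mathbb{F}_p^*$; I read it as $\mathbb{F}_q^*\times\{1\}\le K$, consistent with the conclusion, where $a$ ranges over $\mathbb{F}_q^*$.)

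First, apply Proposition \ref{H'} to $\hat{H}=K$, with $Q_K=\mathbb{F}_q^*$ and $\sigma(\pi_2(K))=\langle\gamma_K\rangle$. This gives
\begin{equation*}
K'=\left\langle \frac{a}{\gamma_K(a)},\ \frac{\psi(b)}{\psi(b)^d}\frac{\psi(d)^b}{\psi(d)}\frac{\alpha(b,d)}{\alpha(d,b)} ;\ a\in\mathbb{F}_q^*,\ b,d\in\pi_2(K)\right\rangle\times\{1\}.
\end{equation*}
Call the right-hand group $S$ and set $S_0=\langle a/\gamma_K(a): a\in\mathbb{F}_q^*\rangle$. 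The inclusion $\supseteq$ in the corollary then needs a small argument, which I do first. The pure cocycle quotient $\alpha(b,d)/\alpha(d,b)$ arises as the commutator $[(1,b),(1,d)]$, using the computation in the proof of Proposition \ref{H'} with $a=c=1$. This is legitimate because $\mathbb{F}_q^*\le K$ forces $(1,b)\in K$ for every $b\in\pi_2(K)$. Hence the generators claimed in the corollary all lie in $K'$.

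For $\subseteq$, take any generator of the second type in $S$. By the computation above, its factor $\frac{\psi(b)}{\psi(b)^d}$ has the form $x/x^d$ with $x=\psi(b)\in\mathbb{F}_q^*$. Write $\sigma(d)=\gamma_K^j$. The telescoping identity from the proof of Proposition \ref{H'} gives
\begin{equation*}
\frac{x}{\gamma_K^j(x)}=\prod_{i=0}^{j-1}\frac{\gamma_K^i(x)}{\gamma_K(\gamma_K^i(x))}\in S_0.
\end{equation*}
Similarly, $\frac{\psi(d)^b}{\psi(d)}=\left(\frac{y}{y^b}\right)^{-1}$ with $y=\psi(d)$, so it also lies in $S_0$. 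What remains of the generator is $\alpha(b,d)/\alpha(d,b)$. Therefore $S$ is contained in the group generated by $S_0$ together with these cocycle quotients, and equality of the two subgroups of $\mathbb{F}_q^*\times\{1\}$ follows.

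I expect the main obstacle to be the bookkeeping around the choice of $\psi$ and the exact form of the commutator formula. In particular, the simplification $\alpha(bd,b^{-1})/\alpha(b,b^{-1})^d=1/\alpha(d,b)$ must be valid for arbitrary $b,d$, using normalization, commutativity of $G$ and the cocycle identity. If that step is shaky, I would instead compute $[(1,b),(1,d)]$ directly from the multiplication rule and the inverse formula of Proposition \ref{hatGisgroup}. That direct computation suffices, because with $Q_K=\mathbb{F}_q^*$ every element of $K$ is $(a,1)(1,b)$, so $K'$ is generated by conjugates of commutators of such elements.
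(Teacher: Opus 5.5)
Your proposal is correct and is essentially the paper's argument. The paper simply cites Proposition~\ref{H'}, in effect with the trivial Jineta $\psi\equiv 1$; this is legitimate because $\mathbb{F}_q^*\times\{1\}\le K$, the paper uses that choice explicitly later, and it makes the $\psi$-factors vanish at once. You instead keep an arbitrary $\psi$ and absorb those factors into $\langle a/\gamma_K(a)\rangle$ by the same telescoping used inside Proposition~\ref{H'}, which costs one extra step but shows the result does not depend on the choice of Jineta. Your reading of $\mathbb{F}_p^*$ as $\mathbb{F}_q^*$ is the intended one, and it is necessary: with the literal hypothesis the statement fails.
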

\begin{proof}
    A direct application of proposition \ref{H'}.
\end{proof}

Finally, we have the following corollary
\begin{corollary}
        Let $\hat{H}=Q_{\hat{H}} Jin_\psi(\hat{H}) \leq \hat{G}$ and $\hat{K}= \mathbb{F}_q^* \cdot Jin(\hat{H}) \leq \hat{G}$ such that $\hat{K}'\leq \hat{H} \leq \hat{K}$, and let $C$ be a set of generators of the cyclic components of $H=\pi_2(\hat{H})$. Then the map $\psi: C \longrightarrow \mathbb{F}_q^*$ with $(\psi(a),a)\in \hat{H}   \quad \forall a\in C$ can be extended to a Jineta of $\hat{H}$ if and only if for all $a\in C$, we have that
    \begin{align} 
       \left(\prod_{k=1}^{ord(a)-1}\alpha(a,a^k) \prod_{k=0}^{ord(a)-1} \psi(a)^{a^k}\right) ^ { |Q_{\hat{H}}|} =&  1 
    \end{align}
\end{corollary}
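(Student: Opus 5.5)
This corollary should follow from Lemma \ref{JinetaExtender} once we show that, under $\hat{K}'\leq\hat{H}$, the first (commutator) condition there holds automatically. So the plan is to invoke Lemma \ref{JinetaExtender}, which characterises extendability of $\psi\colon C\to\mathbb{F}_q^*$ to a Jineta of $\hat{H}$ by two families of conditions, and to prove that the commutator condition
\[
\frac{\psi(a)}{\psi(a)^b}\cdot\frac{\psi(b)^a}{\psi(b)}\cdot\frac{\alpha(a,b)}{\alpha(b,a)}\in Q_{\hat{H}}\qquad \forall a,b\in C
\]
is implied by $\hat{K}'\leq \hat{H}$. What remains is then exactly the power condition stated in the corollary. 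The equivalence between membership in $Q_{\hat{H}}$ and the $|Q_{\hat{H}}|$-th power being $1$ was already justified in the lemma, because $\mathbb{F}_q^*$ is cyclic.

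For the key step, fix $a,b\in C\subseteq \pi_2(\hat{H})\subseteq\pi_2(\hat{K})$. Since $(\psi(a),a),(\psi(b),b)\in\hat{H}\leq\hat{K}$, the commutator computation in the proof of Proposition \ref{H'} gives
\[
[(\psi(a),a),(\psi(b),b)]=\left(\frac{\psi(a)}{\psi(a)^b}\cdot\frac{\psi(b)^a}{\psi(b)}\cdot\frac{\alpha(a,b)}{\alpha(b,a)},\,1\right).
\]
This is the first expression of Lemma \ref{JinetaExtender} up to the labelling of $a$ and $b$; if the orientation differs, the element is its inverse, which lies in the same subgroup. The commutator lies in $\hat{K}'$, and by hypothesis $\hat{K}'\leq\hat{H}$. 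Its second coordinate is $1$, so its first coordinate lies in $Q_{\hat{H}}$ by the definition of the Quag. This is exactly the first condition, and it holds for any $\psi$ with $(\psi(a),a)\in\hat{H}$.

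We could also argue through the Corollary to Proposition \ref{H'}. Because $\hat{K}\supseteq\mathbb{F}_q^*\times\{1\}$, the elements $\frac{\alpha(b,d)}{\alpha(d,b)}$ and $\frac{x}{\gamma_K(x)}$ lie in $\hat{K}'\leq\hat{H}$. Since $\pi_2(\hat{H})\subseteq\pi_2(\hat{K})$, so that $\sigma(\pi_2(\hat{H}))\leq\langle\gamma_K\rangle$, the telescoping product used in Proposition \ref{H'} shows that $\frac{\psi(a)}{\psi(a)^b}$ and $\frac{\psi(b)^a}{\psi(b)}$ also lie in $\hat{K}'$. The direct commutator argument above is simpler, so we use that one.

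The only delicate point is bookkeeping: we need the sign and orientation conventions in Proposition \ref{H'}'s commutator formula to match the first condition of Lemma \ref{JinetaExtender}, and we need $\hat{H}\leq\hat{K}$ so that the commutator is genuinely in $\hat{K}'$. Both hold under the hypotheses. With the first condition discharged, Lemma \ref{JinetaExtender} reduces to the single displayed condition, which proves both directions of the corollary.
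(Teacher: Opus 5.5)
Your proposal is correct and follows the paper's proof. Both reduce to Lemma \ref{JinetaExtender} and use $\hat{K}'\leq\hat{H}$ to discharge the commutator condition. The paper places $\frac{\psi(a)}{\psi(a)^b}$ and $\frac{\alpha(a,b)}{\alpha(b,a)}$ separately in $Q_{\hat{H}}$ as generators of $\hat{K}'$, while you recognise the whole expression as the first coordinate of $[(\psi(a),a),(\psi(b),b)]$. Your version is slightly stronger: since $G$ is abelian, the commutator of two elements of $\hat{H}$ already lies in $\hat{H}$ with trivial second coordinate, so $\hat{K}'\leq\hat{H}$ is not actually needed for this step.
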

\begin{proof}
    From \ref{JinetaExtender}, we have that $\psi(a)$ must satisfy
        \begin{align} 
        \left(\frac{\psi(a)}{\psi(a)^b} \cdot \frac{\psi(b)^a}{\psi(b)} \cdot \frac{\alpha(a,b)}{\alpha(b,a)} \right) ^ { |Q_{\hat{H}}|} =& 1 \\
       \left(\prod_{k=1}^{ord(a)-1}\alpha(a,a^k) \prod_{k=0}^{ord(a)-1} \psi(a)^{a^k}\right) ^ { |Q_{\hat{H}}|} =&  1 
    \end{align}
    however, as $\hat{K}'\leq \hat{H}$, we have that $\frac{\psi(a)}{\psi(a)^b} ,\frac{\alpha(a,b)}{\alpha(b,a)} \in Q_{\hat{H}} $, so the first condition is always true.
\end{proof}

We will focus on how to solve the previous equation. To do so, we will assume that $\alpha$ is given in terms of a generator $\lambda$ of $\mathbb{F}_q^*$. Therefore, if we denote by $\mu\left(\prod_{k=1}^{ord(a)-1}\alpha(a,a^k) \right) = \alpha'(a)$ we have that $\prod_{k=1}^{ord(a)-1}\alpha(a,a^k) = \lambda^{\alpha'(a)}$. In addition, let $N(a) = 1 + \sigma(a) + \cdot + \sigma^{ord(a)-1}(a)$, then the previous equation is equivalent to
\begin{align*}
    \lambda^{\alpha'(a)} \left(\lambda^{\Psi(a)}\right) ^{N(a)} \in Q_{\hat{H}}
\end{align*}

Which is equivalent to
\begin{equation*}
   (\alpha'(a) + \Psi(a)N(a)) |Q_{\hat{H}}| \equiv_{q-1} 0 
\end{equation*}
As $|Q_{\hat{H}}| $ divides $q-1$, we have that 

\begin{equation*}
   \Psi(a)N(a) \equiv_{q-1 /  |Q_{\hat{H}}| } - \alpha'(a)
\end{equation*}

Therefore, we have proven the following result
\begin{proposition}
 Let $\alpha:G\longrightarrow \mathbb{Z}_{q-1}$ be given by $\prod_{k=1}^{ord(a)-1}\alpha(a,a^k) = \lambda^{\alpha'(a)}$, and let $N(a) = 1 + \sigma(a) + \cdot + \sigma^{ord(a)-1}(a)$. Let $\hat{H}=Q_{\hat{H}} Jin(\hat{H}) \leq \hat{G}$ and let $v(a)= gcd(N(a),q-1 /  |Q_{\hat{H}}| )$, $N(a)/v(a)=c(a)$, and let $C$ be a set of generators of $H$. Then $\psi:C \longrightarrow \mathbb{F}_{q-1}$ given by $\psi(a) = \lambda^{\Psi(a)}$ can be extended to a Jineta for $H$ if and only if
    \begin{equation*}
   \Psi(a)N(a) \equiv_{q-1 /  |Q_{\hat{H}}| } - \alpha'(a)
\end{equation*}

\end{proposition}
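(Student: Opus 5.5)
The proposition is essentially the preceding corollary rewritten additively via the discrete logarithm $\mu$. The plan is to fix the generator $\lambda$ of $\mathbb{F}_q^*$ and work throughout with exponents. We are in the setting of that corollary: $\hat{K}'\leq\hat{H}\leq\hat{K}$, so the commutator condition of Lemma \ref{JinetaExtender} holds automatically. Hence $\psi$ extends to a Jineta of $\hat{H}$ if and only if, for every $a\in C$,
\begin{equation*}
\left(\prod_{k=1}^{ord(a)-1}\alpha(a,a^k)\prod_{k=0}^{ord(a)-1}\psi(a)^{a^k}\right)^{|Q_{\hat{H}}|}=1 .
\end{equation*}
So the whole task is to rewrite this single multiplicative condition as the stated linear congruence on $\Psi(a)$.

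First, I would translate the Galois action into exponents. Since $\sigma(a)$ is a power of Frobenius, $x^{a}=x^{p^{j}}$ for some $j$. Identifying $\sigma(a)$ with the integer $p^j$, as the paper does with $\mu(a)$, we get $(\lambda^{\Psi(a)})^{a^k}=\lambda^{\Psi(a)\sigma(a)^k}$. The product over $k=0,\dots,ord(a)-1$ then becomes $\lambda^{\Psi(a)N(a)}$, with $N(a)=1+\sigma(a)+\cdots+\sigma(a)^{ord(a)-1}$. The cocycle factor is $\lambda^{\alpha'(a)}$ by the definition of $\alpha'$. The condition therefore reads $\lambda^{(\alpha'(a)+\Psi(a)N(a))|Q_{\hat{H}}|}=1$, that is,
\begin{equation*}
(\alpha'(a)+\Psi(a)N(a))\,|Q_{\hat{H}}|\equiv 0 \pmod{q-1}.
\end{equation*}

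Second, since $|Q_{\hat{H}}|$ divides $q-1$ (because $\mathbb{F}_q^*$ is cyclic), this congruence is equivalent to $\Psi(a)N(a)\equiv-\alpha'(a)\pmod{(q-1)/|Q_{\hat{H}}|}$. This gives both directions at once, because every step is an equivalence. The quantities $v(a)$ and $c(a)$ appearing in the statement are not needed for the equivalence itself. I would only add a remark that, by the usual solvability criterion for linear congruences, a solution $\Psi(a)$ exists if and only if $v(a)\mid\alpha'(a)$, and the solutions are then described via $c(a)$.

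The main obstacle is the bookkeeping in the first step. The integer $\sigma(a)$ representing the Frobenius power is only defined modulo the multiplicative order relevant to $\mathbb{F}_q^*$, so I must check that $N(a)$ is well defined modulo $q-1$, or at least modulo $(q-1)/|Q_{\hat{H}}|$. This follows because $p^{n}\equiv1\pmod{q-1}$, so changing $j$ by a multiple of $n$ leaves the residue of $\sigma(a)^k$ unchanged. Beyond this, I would only verify that $\alpha'(a)$ is taken modulo $q-1$, so that reducing it modulo $(q-1)/|Q_{\hat{H}}|$ is legitimate.
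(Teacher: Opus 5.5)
Your proposal is correct and follows the paper's own argument essentially step for step. Like the paper, you reduce to the single power condition of the preceding corollary, since the commutator condition is automatic once $\hat{K}'\leq\hat{H}\leq\hat{K}$. You then rewrite it in exponents of $\lambda$ as $(\alpha'(a)+\Psi(a)N(a))\,|Q_{\hat{H}}|\equiv 0 \pmod{q-1}$ and divide through by $|Q_{\hat{H}}|$. Your additions change nothing in the route: you state explicitly the hypothesis $\hat{K}'\leq\hat{H}\leq\hat{K}$ that the statement leaves implicit, check via $p^n\equiv 1 \pmod{q-1}$ that $N(a)$ is well defined modulo $q-1$, and note that $v(a)$ and $c(a)$ are irrelevant to the equivalence.
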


Now, as we want a pair of groups $(K,H)$ where $A\leq K$, we have that $K=\mathbb{F}_p^* \cdot Jin(\hat{K})$, and we can take $Jin(\hat{K})=\{1\} \times  \pi_2(K)$, with $Z_\alpha^\sigma(G) \leq \pi_2(K)$. In addition, based on the last proposition
    \begin{equation}
        K' = \left\langle \frac{a}{\gamma_K(a)}, \frac{\alpha(b,d) }{\alpha(d,b)}; a\in  \mathbb{F}_q^*, b,d \in \pi_2(K) \right\rangle \times {1}
    \end{equation}
Where $\sigma(K) = \langle \gamma_K\rangle$. As we have that
\begin{equation}
    H' \leq K' \leq H \leq K
\end{equation}
Then given a subgroup $H=Q_{\hat{H}} Jin_\psi(H)$, we must have $Q_{K'} \subset Q_{\hat{H}}$ and therefore

\begin{equation}
   \frac{\alpha(b,d)}{\alpha(d,b)} \in Q_{\hat{H}} \quad \forall a\in \mathbb{F}_q, b,d \in \pi_2(H)
\end{equation}

So we are under the conditions of proposition (\ref{EcuacionesG'}) and we can construct the following algorithm.

\begin{proposition}
    Let $G$ be an abelian group, $\mathbb{F}_q$ a finite field with $q=p^n$, $\sigma : G \longrightarrow Aut(\mathbb{F}_q)$ an action of $G$ over the field, and let  $\alpha\in Z^2(G,\mathbb{F}_q^*)$ be a $2-$cocycle. Then all the strongly Shoda pair are given by the following algorithm,
     \begin{algorithm}[H]
Let $G$ be an abelian group decomposed in its cyclic components.
    \begin{enumerate}
        \item Initialize the empty sets $A_1, A_2, A_3,A_4,A_5,T$
        \item Compute $Z_\alpha^\sigma(G)$
        \item Compute $K\leq G$ such that $Z_\alpha^\sigma(G) \leq K$ and store it in $A_1$
        \item For each $K\in J$ compute $Q_{K'}$ and store $(Q_{K'},K)$ in $A_2$. Store all possible values$\zeta(K)=|Q_{K'}|$ in the set $T$.
        \item For each $C_m \leq \mathbb{F}_q^*$ eliminate $(Q_{K'},K)$ from $A_2$ such that they are not maximal respect to $Q_{K'}=C_m$ 
        \item For each $(C_{\zeta(K)},K)$ in $A_2$ took $H\leq K$ such that $K/H$ is cyclic, $C_t$ with $\zeta(K)\leq t < h$ for all $\zeta(K)<h \in T$. Keep $(C_t, H, K)$ in the set $A_3$.
        \item For each $(C_t,H,K)$ compute $\alpha'$, $N$ for each generator of all $H$ and keep it $S$.  For each  $(C_t,H,K)$ try to solve the system (\ref{EcuacionesG'}), compute all possible $\psi$ and keep $(C_t\cdot Jin_\psi(H),J)$ them $A_4$
        \item For each $(A,B)\in A_4$ take a generator of $\pi_2(B)/ \pi_2(A)$ and try to solve (\ref{eqCyclic}). Eliminate those without solution. 
        \item Return $A_4$
    \end{enumerate}
\end{algorithm}
\end{proposition}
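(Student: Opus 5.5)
The plan is to check that the algorithm enumerates exactly the pairs $(\hat K,\hat H)$ of subgroups of $\hat G$ satisfying conditions (1) and (2) of Proposition \ref{metabelian}, taking $A=\mathbb{F}_q^*\times Z_\alpha^\sigma(G)$ as the maximal abelian subgroup (it contains $\hat G'\leq \mathbb{F}_q^*\times\{1\}$, by the corollary that $\hat G$ is metabelian). I would prove two inclusions: every output is a pair satisfying (1)–(2) (soundness), and every pair satisfying (1)–(2) appears in the output (completeness).

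\textbf{Shape of $\hat K$.} Since $\mathbb{F}_q^*\times\{1\}\leq A\leq \hat K$, the Quag of $\hat K$ is all of $\mathbb{F}_q^*$. Hence $\hat K=\mathbb{F}_q^*\cdot(\{1\}\times \pi_2(\hat K))$ with $Z_\alpha^\sigma(G)\leq \pi_2(\hat K)$. So $\hat K$ is determined by a subgroup $K\leq G$ containing $Z_\alpha^\sigma(G)$, which is step 3. Its derived subgroup is given by the corollary to Proposition \ref{H'}: $\hat K'=Q_{K'}\times\{1\}$, with $Q_{K'}$ generated by the elements $a/\gamma_K(a)$ and the quotients $\alpha(b,d)/\alpha(d,b)$. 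This is step 4.

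\textbf{Shape of $\hat H$.} Write $\hat H=Q_{\hat H}\,Jin_\psi(\hat H)$. The condition $\hat K'\leq \hat H$ is equivalent to $Q_{K'}\leq Q_{\hat H}$, since $\hat K'$ lies in $\mathbb{F}_q^*\times\{1\}$. Because $\mathbb{F}_q^*$ is cyclic, this just says $|Q_{K'}|$ divides $|Q_{\hat H}|$, which is what step 6 uses when it chooses $C_t$. Under this condition the first family of equations of Lemma \ref{JinetaExtender} holds automatically, by the final corollary before the statement. So $\hat H$ exists with prescribed $Q_{\hat H}$ and $\pi_2(\hat H)=H$ exactly when the norm congruences $\Psi(a)N(a)\equiv-\alpha'(a)$ are solvable (the last proposition and Proposition \ref{EcuacionesG'}). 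Step 7 enumerates all such $\psi$, and Lemma \ref{JinetaExtender} says $\psi$ is determined by its values on generators up to $Q_{\hat H}$, so nothing is missed. Cyclicity of $\hat K/\hat H$ is decided by the corollary giving equation (\ref{eqCyclic}), using a generator $t$ of $K/H$ of order $[K:H]$ from the lemma on abelian cyclic quotients. That is step 8. Together these show each output satisfies (2) and $\hat K'\leq\hat H\leq\hat K$, and that every such pair is produced before the pruning steps.

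\textbf{Maximality (main obstacle).} Condition (1) requires $\hat K$ to be maximal among subgroups $B\geq A$ with $B'\leq \hat H$. By the shape result above, such $B$ correspond to subgroups $K\leq G$, and $B'\leq \hat H$ means $Q_{K'}\leq Q_{\hat H}$. The order $|Q_{K'}|$ is monotone in $K$, because enlarging $K$ enlarges both $\gamma_K$ and the set of commutator quotients. So for a fixed Quag value, maximality of $\hat K$ amounts to maximality of $K$ among subgroups with $|Q_{K'}|$ dividing $|Q_{\hat H}|$. Step 5 handles this by discarding non-maximal $K$ for each cyclic $C_m$. The delicate point is the window $\zeta(K)\leq t<h$ in step 6: I must show that if $t$ reaches the next attainable value $h\in T$, then some strictly larger $K$ with $\zeta=h$ would violate maximality. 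I would attack this by first proving that the attainable values of $|Q_{K'}|$ are totally ordered by divisibility along chains of subgroups. I expect this to be the hardest part, and it may require strengthening the stated ordering conditions to divisibility conditions. Once maximality is in hand, Proposition \ref{metabelian} shows these pairs give all primitive central idempotents, which proves the claim.
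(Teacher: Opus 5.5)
\textbf{Verdict: genuine gap in the maximality step, and that step cannot be closed for the algorithm as literally stated.}

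Your first two parts are exactly the paper's justification, which is only the paragraph preceding the statement. Those parts cover the shape of $\hat K$, the reduction of $\hat K'\leq\hat H$ to $Q_{K'}\leq Q_{\hat H}$, the Jineta congruences, and cyclicity via (\ref{eqCyclic}). Neither the paper nor your proposal justifies steps 5--6.

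The criterion you write down is the right one. With $Q_{\hat H}=C_t$, $\hat K$ satisfies condition (1) iff $\zeta(K)\mid t$ and $\zeta(B)\nmid t$ for every $B$ with $K<B\leq G$; by monotonicity of $B\mapsto \hat B'$, minimal overgroups suffice. This condition involves only overgroups of $K$. Step 6, however, compares $t$ with the \emph{global} set $T$, which also contains values $\zeta(K'')$ of subgroups $K''$ that do not contain $K$.

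Your proposed bridge, divisibility along chains, is automatic: $K_1\leq K_2$ gives $\hat K_1'\leq \hat K_2'$. It says nothing about such $K''$. Once $\zeta(K)\mid t$ is imposed, steps 5--6 are sound. Indeed, a $B>K$ with $\zeta(B)\mid t$ would satisfy $\zeta(B)\in T$ and $\zeta(K)<\zeta(B)\leq t$. But they are not complete, even with the inequalities replaced by divisibility.

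\textbf{Counterexample.} Let $q=49$ and $G=\langle u\rangle\times\langle s\rangle\cong C_2\times C_4$, with $\sigma(u)=1$ and $\sigma(s)$ the Frobenius.
\begin{itemize}
\item Realise $\hat G$ as $\langle X\rangle\rtimes\langle Y\rangle$ with $|X|=96$, $|Y|=4$, $YXY^{-1}=X^{55}$. Here $\langle X^2\rangle$ plays the role of $\mathbb{F}_{49}^*$: it is normal of order $48$, centralised by $X$, and $YX^2Y^{-1}=(X^2)^7$. So this group is $\hat G$ for a suitable normalised $\alpha$.
\item Since $[X,Y^2]=X^{48}\neq 1$, one may take $Z_\alpha^\sigma(G)=\langle s^2\rangle$.
\item The $\zeta$-values are: $\zeta(\langle u,s^2\rangle)=2$, since $\langle X,Y^2\rangle'=\langle X^{48}\rangle$; $\zeta(\langle s\rangle)=\zeta(\langle us\rangle)=8$; and $\zeta(G)=16$, since $\hat G'=\langle X^{6}\rangle$. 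Hence $T=\{1,2,8,16\}$.
\item Take $\hat K=\langle X,Y^2\rangle$ and $\hat H=\langle X^{12},Y^2\rangle\cong C_8\times C_2$. Then $Q_{\hat H}=C_8\supseteq Q_{K'}$ and $\hat K/\hat H\cong C_{12}$.
\item The only proper overgroup of $\hat K$ is $\hat G$, and $[X,Y]=X^{42}\notin \hat H$. So $(\hat K,\hat H)$ satisfies (1) and (2).
\end{itemize}
Yet step 6 admits only $t<8$ for $K=\langle u,s^2\rangle$. The divisibility reading also discards $t=8$, because $8\in T$ comes from $\langle s\rangle$, which does not contain $K$.

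So the completeness you aim to prove fails for the algorithm as written. In this instance the corresponding idempotent happens to be recovered from the output pair $(\langle X^2,Y\rangle,\langle X^{12},Y\rangle)$, but the pair $(\hat K,\hat H)$ itself is not produced. Step 6 must be replaced by the local overgroup test above before a proof can go through.
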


\section{Example}
In this section, we will show how this algorithm actually produces the idempotents of a twisted skew group algebra. 

Let $G=C_2 \times C_2 \times C_5$ and let $\mathbb{F}_9$ the finite field of $9$ elements. Let $\sigma: G \longrightarrow Aut(\mathbb{F}_9)$ given by $\sigma(a,b,c) (f) = f^{3^{a+b}}$ and the $2-$cocycle
\begin{align*}
    \alpha: G \times G \longrightarrow \mathbb{F}_{9}^* \\
    (a,b,c) , (d,e,f) \longmapsto 2^{ad}
\end{align*}

Let $\lambda = x + 2$ be a generator of the multiplicative group $\mathbb{F}_{9}^\times$, defined via the irreducible polynomial $x^2 + 1 \in \mathbb{F}_3[x]$. Since $2 = \lambda^4$, we can express $\alpha$ in terms of this generator. Furthermore, $\lambda$ satisfies the minimal polynomial relation $\lambda^2 + 2\lambda + 2 = 0$ over $\mathbb{F}_3$.

\begin{enumerate}
    \item Let $A_1, A_2, A_3, A_4, S,T$ empty sets.

\item First, we need to compute  $Z_\alpha^\sigma(G)$. To do so, we have that $\ker(\sigma) \cap Z(G)= \ker (\sigma) = \langle (1,1) \rangle\times C_5$. In addition, $\alpha$ is symmetric, so we have that $Z_\alpha^\sigma(G) =\langle (1,1) \rangle \times C_3$.

\item As a result, the lattice of $G$ with $Z_\alpha^\sigma(G)$ as a subgroup is given by

\begin{center}
    
\begin{tikzpicture}[node distance=2cm, main node/.style={circle, fill=white, draw, minimum size=2em, inner sep=0pt}]

    \node (G) at (0,2) {$C_2 \times C_2 \times C_5$};
    
    \node (E) at (0,0) {$\langle (1,1) \rangle \times C_5$};

    \draw (G) -- (E);
\end{tikzpicture}

\end{center}

\item We compute the associated table \ref{Tabla1} of $Q_{K'}$. To do so, we identify $\mathbb{F}_9^*$ with $C_{8}$

\begin{table} \label{Tabla1}
\caption{Associated table of $Q_{K'}$ for different subgroups $K$.}\label{tbl1}
\begin{tabular}{cc}
\toprule
$K$ & $Q_{K'}$ \\ 
\midrule
$C_2 \times C_2 \times C_5$ & $C_4$ \\
$\langle (1,1) \rangle \times C_5$ & $\{0\}$ \\
\bottomrule
\end{tabular}
\end{table}

So we have $T=\{1,4\}$. 

\item As we eliminate all the subgroup that are not maximal with respect each element in $T$, we have $A_2=\{ (\{0\}, \langle (1,1) \rangle\times C_5), ( C_4, C_2 \times C_2 \times C_5 ) \}$

\item We have the sets 

\begin{align*}
    A_3 = \{   & (C_8,\langle (1,0) \rangle \times C_5, \hat{G}))\\
    & (C_8,\langle (0,1) \rangle \times C_5, \hat{G}))\\
    & (C_8,\langle (1,1) \rangle \times C_5, \hat{G})) \\
     & (C_8, C_2 \times C_2 \times \{0\}, \hat{G}))\\
    & (C_8, \langle (1,1) \rangle \times \{0\}, \hat{G}))\\   
    & (C_8,\langle (1,0) \rangle \times \{0\}, \hat{G}))\\
    & (C_8,\langle (0,1) \rangle \times \{0\}, \hat{G}))\\
    & (C_4,\langle (1,0) \rangle \times C_5, \hat{G}))\\
    & (C_4,\langle (0,1) \rangle \times C_5, \hat{G}))\\
    & (C_4,\langle (1,1) \rangle \times C_5, \hat{G}))\\
    & (C_4, C_2 \times C_2 \times \{0\}, \hat{G}))\\
        & (C_4, C_2 \times C_2 \times C_5, \hat{G}))\\
        & (C_4, \langle (1,1) \rangle \times \{0\}, \hat{G}))\\
        & (C_4,\langle (1,0) \rangle \times \{0\}, \hat{G}))\\
    & (C_4,\langle (0,1) \rangle \times \{0\}, \hat{G}))\\
    & (0, \{0\} \times \{0\} \times \{0\}, Z_\alpha^\sigma(G))\\
    & (0, \{0\} \times \{0\} \times C_5, Z_\alpha^\sigma(G))\\
    & (0, \langle (1,1) \rangle \times \{0\}, Z_\alpha^\sigma(G))\\
    & (0, \langle (1,1) \rangle \times C_5, Z_\alpha^\sigma(G))\\
    & (C_2, \{0\} \times \{0\} \times \{0\}, Z_\alpha^\sigma(G))\\
    & (C_2, \{0\} \times \{0\} \times C_5, Z_\alpha^\sigma(G))\\
    & (C_2, \langle (1,1) \rangle \times \{0\}, Z_\alpha^\sigma(G))\\
    & (C_2, \langle (1,1) \rangle \times C_5, Z_\alpha^\sigma(G))\\
     \}
\end{align*}

    \item We compute $\alpha^*, \alpha', N$ for all generators in table \ref{Table2}.
    \begin{table}
\caption{Values of $\alpha$, $\alpha'$ and $N$ for each generator $C$.}\label{Table2}
\begin{tabular}{cccc}
\toprule
$C$ & $\alpha$ & $\alpha'$ & $N$ \\ 
\midrule
$(1,0,0)$ & $2$ & $4$ & $4$ \\
$(0,1,0)$ & $1$ & $0$ & $4$ \\
$(1,1,0)$ & $2$ & $4$ & $2$ \\
$(0,0,1)$ & $1$ & $0$ & $5$ \\
\bottomrule
\end{tabular}
\end{table}

     \item We compute complete the table \ref{Table3} with $\Psi$. As for $C_8=\mathbb{Z}_8$ all values are possible, we only solve it for $C_4=2\mathbb{Z}_8$, $C_2=4\mathbb{Z}_8$. 
        \begin{table} 
\caption{Values of $\alpha$, $\alpha'$, $N$, and image ideals under $\Psi$ associated with each generator $C$.}\label{Table3}
\begin{tabular}{ccccccc}
\toprule
$C$ & $\alpha$ & $\alpha'$ & $N$ & $\Psi (2\mathbb{Z}_8)$ & $\Psi (4\mathbb{Z}_8)$ & $\Psi (0)$ \\ 
\midrule
$(1,0,0)$ & $2$ & $4$ & $4$ & $\mathbb{Z}_8$  &                &                \\
$(0,1,0)$ & $1$ & $0$ & $4$ & $\mathbb{Z}_8$  &                &                \\
$(1,1,0)$ & $2$ & $4$ & $2$ & $\mathbb{Z}_8$  & $2\mathbb{Z}_8$ & $2\mathbb{Z}_8$ \\
$(0,0,1)$ & $1$ & $0$ & $5$ & $2\mathbb{Z}_8$ & $4\mathbb{Z}_8$ & $0$             \\
\bottomrule
\end{tabular}
\end{table}

    Note that $\psi(x)^2\mathbb{Z}_8$ generate all the same group in the case of $2\mathbb{Z}_8$, so we can count how many subgroup we have, and represent it in table \ref{Table4} and table \ref{Table5}.

\begin{table} 
\caption{Number of pairs $(\hat{H}, \hat{G})$ satisfying $\pi_2(\hat{H}) = H$ for each subgroup $H$.}\label{Table4}
\begin{tabular}{cc}
\toprule
$H$ & Number of $(\hat{H}, \hat{G})$ with $\pi_2(\hat{H}) = H$ \\ 
\midrule
$\langle (1,0) \rangle \times C_5$    & 3 \\
$\langle (0,1) \rangle \times C_5$    & 3 \\
$\langle (1,1) \rangle \times C_5$    & 3 \\
$\langle (1,1) \rangle \times \{0\}$  & 3 \\
$C_2 \times C_2 \times \{0\}$         & 5 \\
$C_2 \times C_2 \times C_5$           & 5 \\
\bottomrule
\end{tabular}
\end{table}

\begin{table} 
\caption{Number of pairs $(\hat{H}, A)$ satisfying $\pi_2(\hat{H}) = H$ for each subgroup $H$.}\label{Table5}
\begin{tabular}{cc}
\toprule
$H$ & Number of $(\hat{H}, A)$ with $\pi_2(\hat{H}) = H$ \\ 
\midrule
$\{0\} \times \{0\} \times C_5$ & 2 \\
$\{0\} \times \{0\} \times \{0\}$ & 2 \\
$\langle (1,1) \rangle \times \{0\}$ & 6 \\
$\langle (1,1) \rangle \times C_5$ & 6 \\
\bottomrule
\end{tabular}
\end{table}

    \item As for $C_8$ we have the cyclicity condition trivially, we will work with $C_4$. In addition, the cyclicity is also trivial for the subgroups of $Z_\alpha^\sigma(G)$ as it is an abelian group and it is direct to compute wether or not the quotient will be cyclic. Note that we can always assume $\Psi(1)=1$. We have the table \ref{table6}
\begin{table}
\caption{Modular equations in $2\mathbb{Z}_8$ and cyclicity determination for each subgroup $H$.}\label{table6}
\begin{tabular}{ccc}
\toprule
$H$ & Equation $2\mathbb{Z}_8$ & Cyclic \\ 
\midrule
$\langle (0,1) \rangle \times C_5$ & $-7 \equiv 4x \pmod{2}$ & No \\
$\langle (1,0) \rangle \times C_5$ & $1 \equiv 4x \pmod{2}$  & No \\
$\langle (1,1) \rangle \times C_5$ & $-7 \equiv 2x \pmod{2}$ & No \\
$C_2 \times C_2 \times \{0\}$      & $1 \equiv 5x \pmod{2}$  & Yes \\
$C_2 \times C_2 \times C_5$        & $-$                     & Yes \\
\bottomrule
\end{tabular}
\end{table}

    \item the Shoda pair are given by

    \begin{align*}
    A_3 = \{   & ( \{0\} \cdot Jin_{\psi_{1}}(\langle (1,1) \rangle \times \{0\} ) , A )  \\
   & ( C_{2} \cdot Jin_{\psi_{2}}(\langle (1,1) \rangle \times \{0\} ) , A )  \\
   & ( C_{2} \cdot Jin_{\psi_{3}}(\langle (1,1) \rangle \times \{0\} ) , A )  \\
   & \{0\} \cdot Jin_{\psi_{4}}(\langle (1,1) \rangle \times C_5) , A )  \\
   & ( C_{2} \cdot Jin_{\psi_{5}}(\langle (1,1) \rangle \times C_5) , A )  \\
   & ( C_{2} \cdot Jin_{\psi_{6}}(\langle (1,1) \rangle \times C_5) , A )  \\
   & ( C_{4} \cdot Jin_{\psi_{7}}(C_2 \times C_2 \times \{0\} ) , \hat{G} ) \\
   & ( C_{8} \cdot (\langle (1,1) \rangle \times \{0\} ) , \hat{G} ) \\
   & ( C_{4} \cdot Jin_{\psi_{9}}(C_2 \times C_2 \times \{0\} ) , \hat{G} ) \\
   & ( C_{8} \cdot(C_2 \times \{0\}  \times \{0\} ) , \hat{G} ) \\
   & ( C_{4} \cdot Jin_{\psi_{11}}(C_2 \times C_2 \times \{0\} ) , \hat{G} ) \\
   & ( C_{8} \cdot(\{0\} \times C_2 \times \{0\} ) , \hat{G} ) \\
   & ( C_{4} \cdot Jin_{\psi_{13}}(C_2 \times C_2 \times \{0\} ) , \hat{G} ) \\
   & ( C_{8} \cdot(C_2 \times C_2 \times \{0\} ) , \hat{G} ) \\
   & ( C_{4} \cdot Jin_{\psi_{15}}(C_2 \times C_2 \times C_5) , \hat{G} ) \\
   & ( C_{8} \cdot(\langle (1,1) \rangle \times C_5) , \hat{G} ) \\
   & ( C_{4} \cdot Jin_{\psi_{17}}(C_2 \times C_2 \times C_5) , \hat{G} ) \\
   & ( C_{8} \cdot(C_2 \times \{0\}  \times C_5) , \hat{G} ) \\
   & ( C_{4} \cdot Jin_{\psi_{19}}(C_2 \times C_2 \times C_5) , \hat{G} ) \\
   & ( C_{8} \cdot (\{0\} \times C_2 \times C_5) , \hat{G} ) \\
   & ( C_{4} \cdot Jin_{\psi_{21}}(C_2 \times C_2 \times C_5) , \hat{G} ) \\
   & ( C_{8} \cdot (C_2 \times C_2 \times C_5) , \hat{G} ) \\
    \}
\end{align*}
\end{enumerate}

Now, we will compute the Wedderburn decomposition, where we denote $N=N_{\hat{G}}(H) \cap N_{\hat{G}}(K) $ and obtain table \ref{table7}.

\begin{table} 
\caption{Normalizer and $E_{\hat{G}}(K/H)$ for each pair}\label{table7}
\begin{tabular}{cccc}
    \toprule
    $H$ & $K$ & $N $ &  $E_{\hat{G}}(K/H)$ \\ 
    \midrule
    $\{0\} \cdot Jin_{\psi_{1}}(\langle (1,1) \rangle \times \{0\} ) $,   &
 $A$  &
     $C_{8} \cdot (\langle (1,1) \rangle \times C_5) $,   &
     $C_{8} \cdot (\langle (1,1) \rangle \times C_5) $,  \\
     $C_{2} \cdot Jin_{\psi_{2}}(\langle (1,1) \rangle \times \{0\} ) $,   &
 $A$  &
     $\hat{G} $,   &
     $C_{8} \cdot (\langle (1,1) \rangle \times C_5) $,  \\
     $C_{2} \cdot Jin_{\psi_{3}}(\langle (1,1) \rangle \times \{0\} ) $,   &
 $A$  &
     $\hat{G} $,   &
     $C_{8} \cdot (\langle (1,1) \rangle \times C_5) $,  \\
     $\{0\} \cdot Jin_{\psi_{4}}(\langle (1,1) \rangle \times C_5) $,   &
 $A$  &
     $C_{8} \cdot (\langle (1,1) \rangle \times C_5) $,   &
     $C_{8} \cdot (\langle (1,1) \rangle \times C_5) $,  \\
     $C_{2} \cdot Jin_{\psi_{5}}(\langle (1,1) \rangle \times C_5) $,   &
 $A$  &
     $\hat{G} $,   &
     $\hat{G} $,  \\
     $C_{2} \cdot Jin_{\psi_{6}}(\langle (1,1) \rangle \times C_5) $,   &
 $A$  &
     $\hat{G} $,   &
     $\hat{G} $,  \\
     $C_{4} \cdot Jin_{\psi_{7}}(C_2 \times C_2 \times \{0\} ) $,   &
 $\hat{G}$  &
     $\hat{G} $,   &
     $\hat{G} $,  \\
     $C_{8} \cdot (\langle (1,1) \rangle \times \{0\} ) $,   &
 $\hat{G}$  &
     $\hat{G} $,   &
     $\hat{G} $,  \\
     $C_{4} \cdot Jin_{\psi_{9}}(C_2 \times C_2 \times \{0\} ) $,   &
 $\hat{G}$  &
     $\hat{G} $,   &
     $\hat{G} $,  \\
     $C_{8} \cdot(C_2 \times \{0\}  \times \{0\} ) $,   &
 $\hat{G}$  &
     $\hat{G} $,   &
     $\hat{G} $,  \\
     $C_{4} \cdot Jin_{\psi_{11}}(C_2 \times C_2 \times \{0\} ) $,   &
 $\hat{G}$  &
     $\hat{G} $,   &
     $\hat{G} $,  \\
     $C_{8} \cdot(\{0\} \times C_2 \times \{0\} ) $,   &
 $\hat{G}$  &
     $\hat{G} $,   &
     $\hat{G} $,  \\
     $C_{4} \cdot Jin_{\psi_{13}}(C_2 \times C_2 \times \{0\} ) $,   &
 $\hat{G}$  &
     $\hat{G} $,   &
     $\hat{G} $,  \\
     $C_{8} \cdot(C_2 \times C_2 \times \{0\} ) $,   &
 $\hat{G}$  &
     $\hat{G} $,   &
     $\hat{G} $,  \\
     $C_{4} \cdot Jin_{\psi_{15}}(C_2 \times C_2 \times C_5) $,   &
 $\hat{G}$  &
     $\hat{G} $,   &
     $\hat{G} $,  \\
     $C_{8} \cdot(\langle (1,1) \rangle \times C_5) $,   &
 $\hat{G}$  &
     $\hat{G} $,   &
     $\hat{G} $,  \\
     $C_{4} \cdot Jin_{\psi_{17}}(C_2 \times C_2 \times C_5) $,   &
 $\hat{G}$  &
     $\hat{G} $,   &
     $\hat{G} $,  \\
     $C_{8} \cdot(C_2 \times \{0\}  \times C_5) $,   &
 $\hat{G}$  &
     $\hat{G} $,   &
     $\hat{G} $,  \\
     $C_{4} \cdot Jin_{\psi_{19}}(C_2 \times C_2 \times C_5) $,   &
 $\hat{G}$  &
     $\hat{G} $,   &
     $\hat{G} $,  \\
     $C_{8} \cdot (\{0\} \times C_2 \times C_5) $,   &
 $\hat{G}$  &
     $\hat{G} $,   &
     $\hat{G} $,  \\
     $C_{4} \cdot Jin_{\psi_{21}}(C_2 \times C_2 \times C_5) $,   &
 $\hat{G}$  &
     $\hat{G} $,   &
     $\hat{G} $,  \\
     $C_{8} \cdot (C_2 \times C_2 \times C_5) $,   &
 $\hat{G}$  &
     $\hat{G} $,   &
     $\hat{G} $,  \\
     \bottomrule
 \end{tabular}
\end{table}

So we have table \ref{table8}, and after counting the number of orbits of each component, lead to

\begin{table}
\caption{Parameters of the Wedderburn decomposition}\label{table8}
\begin{tabular}{ccccc}
    \toprule
    $(H,K)$ & $ [\hat{G}:K] $ & $[K : H ]$ & ord of $q$ $(3)$ module $[K : H ]$ & $[E:K]$ \\ 
    \hline
     $C_{1} \cdot Jin_{\psi_{1}}(\langle (1,1) \rangle \times \{0\} ) $,   & $
2$ & $
40$ & $
4$ & $
1$ \\
    $C_{2} \cdot Jin_{\psi_{2}}(\langle (1,1) \rangle \times \{0\} ) $,   & $
2$ & $
20$ & $
4$ & $
1$ \\
    $C_{2} \cdot Jin_{\psi_{3}}(\langle (1,1) \rangle \times \{0\} ) $,   & $
2$ & $
20$ & $
4$ & $
1$ \\
    $C_{1} \cdot Jin_{\psi_{4}}(\langle (1,1) \rangle \times C_5) $,   & $
2$ & $
8$ & $
2$ & $
1$ \\
    $C_{2} \cdot Jin_{\psi_{5}}(\langle (1,1) \rangle \times C_5) $,   & $
2$ & $
4$ & $
2$ & $
2$ \\
    $C_{2} \cdot Jin_{\psi_{6}}(\langle (1,1) \rangle \times C_5) $,   & $
2$ & $
4$ & $
2$ & $
2$ \\
    $C_{4} \cdot Jin_{\psi_{7}}(C_2 \times C_2 \times \{0\} ) $,   & $
1$ & $
10$ & $
4$ & $
1$ \\
    $C_{8} \cdot (\langle (1,1) \rangle \times \{0\} ) $,   & $
1$ & $
10$ & $
4$ & $
1$ \\
    $C_{4} \cdot Jin_{\psi_{9}}(C_2 \times C_2 \times \{0\} ) $,   & $
1$ & $
10$ & $
4$ & $
1$ \\
    $C_{8} \cdot(C_2 \times \{0\}  \times \{0\} ) $,   & $
1$ & $
10$ & $
4$ & $
1$ \\
    $C_{4} \cdot Jin_{\psi_{11}}(C_2 \times C_2 \times \{0\} ) $,   & $
1$ & $
10$ & $
4$ & $
1$ \\
    $C_{8} \cdot(\{0\} \times C_2 \times \{0\} ) $,   & $
1$ & $
10$ & $
4$ & $
1$ \\
    $C_{4} \cdot Jin_{\psi_{13}}(C_2 \times C_2 \times \{0\} ) $,   & $
1$ & $
10$ & $
4$ & $
1$ \\
    $C_{8} \cdot(C_2 \times C_2 \times \{0\} ) $,   & $
1$ & $
5$ & $
4$ & $
1$ \\
    $C_{4} \cdot Jin_{\psi_{15}}(C_2 \times C_2 \times C_5) $,   & $
1$ & $
2$ & $
1$ & $
1$ \\
    $C_{8} \cdot(\langle (1,1) \rangle \times C_5) $,   & $
1$ & $
2$ & $
1$ & $
1$ \\
    $C_{4} \cdot Jin_{\psi_{17}}(C_2 \times C_2 \times C_5) $,   & $
1$ & $
2$ & $
1$ & $
1$ \\
    $C_{8} \cdot(C_2 \times \{0\}  \times C_5) $,   & $
1$ & $
2$ & $
1$ & $
1$ \\
    $C_{4} \cdot Jin_{\psi_{19}}(C_2 \times C_2 \times C_5) $,   & $
1$ & $
2$ & $
1$ & $
1$ \\
    $C_{8} \cdot (\{0\} \times C_2 \times C_5) $,   & $
1$ & $
2$ & $
1$ & $
1$ \\
    $C_{4} \cdot Jin_{\psi_{21}}(C_2 \times C_2 \times C_5) $,   & $
1$ & $
2$ & $
1$ & $
1$ \\
    $C_{8} \cdot (C_2 \times C_2 \times C_5) $,   & $
1$ & $
1$ & $
1$ & $
1$ \\
\bottomrule
    \end{tabular}
\end{table}

\begin{equation}
    \mathbb{F}_3 \hat{G} \cong \mathbb{F}_{3}^{8} \oplus \mathbb{F}_{3^{4}}^{8} \oplus \text{Mat}_{2}(\mathbb{F}_{3})^{2} \oplus \text{Mat}_{2}(\mathbb{F}_{3^{2}})^{2} \oplus \text{Mat}_{2}(\mathbb{F}_{3^{4}})^{6}
\end{equation}

Now, thanks to the idempotents of $\mathbb{F}_3 \hat{G}$ we can check that

\begin{align*}
    I&= \langle (\lambda^2,(0,0,0)) + 2(\lambda,(0,0,0))+ 2(1,(0,0,0))\rangle  \\&\simeq \mathbb{F}_{3}^{8} \oplus \mathbb{F}_{3^{4}}^{8} \oplus \text{Mat}_{2}(\mathbb{F}_{3})^{2} \oplus \text{Mat}_{2}(\mathbb{F}_{3^{2}}) \oplus \text{Mat}_{2}(\mathbb{F}_{3^{4}})^{4}
\end{align*}

And therefore 

\begin{equation}
   \mathbb{F}_9^{\sigma}[G,\alpha]\cong\mathbb{F}_3 \hat{G}/ I \cong \text{Mat}_{2}(\mathbb{F}_{3^{2}}) \oplus \text{Mat}_{2}(\mathbb{F}_{3^{4}})^{2}
\end{equation}

As we wanted to compute. Note that the dimension over $\mathbb{F}_3$ match, as 
$\dim_{\mathbb{F}_3}(\mathbb{F}_9^{\sigma}[G,\alpha]) = 2\cdot |G| = 40$ and $\dim_{\mathbb{F}_3}(\text{Mat}_{2}(\mathbb{F}_{3^{2}}) \oplus \text{Mat}_{2}(\mathbb{F}_{3^{4}})^{2})=40$

\section*{Acknowledgements}
This research was supported by the Spanish Ministry of Science, Innovation, and Universities under the FPU 2023 grant programme, and the project PID2023-149203NB-I00.

\bibliographystyle{cas-model2-names}

\bibliography{cas-refs}



\end{document}